\newcommand{\bc}{\begin{center}}
\newcommand{\ec}{\end{center}}
\newcommand{\be}{\begin{enumerate}}
\newcommand{\ee}{\end{enumerate}}
\newcommand{\beq}{\begin{equation}}
\newcommand{\eeq}{\end{equation}}
\newcommand{\bi}{\begin{itemize}}
\newcommand{\ei}{\end{itemize}}
\newcommand{\bd}{\begin{description}}
\newcommand{\ed}{\end{description}}
\newcommand{\ba}{\begin{array}}
\newcommand{\bea}{\begin{eqnarray*}}
\newcommand{\eea}{\end{eqnarray*}}
\newcommand{\ea}{\end{array}}
\newcommand{\bt}{\begin{tabular}}
\newcommand{\et}{\end{tabular}}
\newcommand{\bmi}{\begin{minipage}}
\newcommand{\emi}{\end{minipage}}
\newtheorem{thm}{Theorem}[section]
\newtheorem{defn}[thm]{Definition}
\newtheorem{lem}[thm]{Lemma}
\newtheorem{pro}[thm]{Proposition}
\newtheorem{exa}[thm]{Example}
\newtheorem{rem}[thm]{Remark}
\begin{document}

\bc {\bf\large The distributive elements of a near-field}\\[3mm]
{\sc Julien Bahimuzi }

\it\small
African Institute for Mathematical sciences\\
AIMS RWANDA\\
\rm e-mail: julien.bahimuzi@aims.ac.rw
\ec
 
\normalsize

\quotation{\small {\bf Abstract:} 
\small
{In this thesis, we investigated some properties of (left)near fields  and  derived  some results. We are focusing on $D(\alpha, \beta)$ which is the generalized set of distributive elements of a nearfield. In particular, we investigated some conditions on $\alpha,\beta, \alpha+\beta$ for $D(\alpha, \beta)$ to be a subfield of $\mathbb{F}_{q^{n}}$. In nearfield theory the two distributive laws  can not hold at the same time. So in term of left nearfield and near-ring,  the right distributivity does not hold and to solve the problem, we defined a set of all distributive elements called $D(R)$.} \\
\normalsize


\section{Introduction}
The study of distributive elements in nearfields has a rich history, dating back to the early 20th century. The idea of a near field is introduced in $1905$ where the American mathematician L. Dickson examined it for the first time and presented a first example of a nearfield (\cite{hussein2022some}). The concept of distributivity was introduced by Wedderburn in his classic paper on quasifields (\cite{Wedderburn1926OnQ}). He defined a quasifield as an algebraic system in which the multiplication operation distributes over addition. Later, Bruck (\cite{Bruck1946Contributions}) extended this concept to nearfields, which are more general than quasifields.

Several researchers have studied distributive elements in nearfields and their applications. In particular, Dickson nearfields have received considerable attention due to their interesting algebraic properties and their connection with coding theory and cryptography. Dickson nearfields are a family of nearfields that are constructed using finite fields and a quadratic form (\cite{Dickson1958Linear}). The distributive elements of a Dickson nearfield are closely related to the quadratic residues and non-residues of the finite field (\cite{Gow2017Distributive}).

In recent years, there has been renewed interest in the study of distributive elements in nearfields, due to their applications in combinatorial designs and cryptography. Several families of nearfields have been constructed using distributive elements, such as twisted nearfields and generalized nearfields [(\cite{Clay1974Twisted}), (\cite{Kinyon2013Generalized})]. These families have been used to construct efficient error-correcting codes and cryptographic primitives.

Despite the extensive research on distributive elements in nearfields, several open problems remain. For example, it is still an open question whether every finite nearfield has a non-trivial distributive element (\cite{Minev1998Distributive}). Also, the structure and properties of the generalized set of distributive elements are not well-understood, and further investigation is needed.

Recently the notions of near vector space have been defined in (\cite{djagba2019contributions}) and in (\cite{djagba2020subspace}) the characterized  subspace structure of Beidleman near-vector spaces is investigated.

spaces and classify their R-subgroups. 
The main contribution of this thesis is to provide a comprehensive study of the generalized set of distributive elements in nearfields. We investigate the structure and properties of this set, and provide some  results and insights. Our study sheds light on the behavior of distributive elements in nearfields, and provides a basis for further research in this area.

\subsection{Motivation}
Nearfields are important algebraic structures that have been extensively studied due to their applications in coding theory, cryptography, and combinatorial designs. They generalize both fields and quasifields, and their properties and structures are of great interest to mathematicians and engineers alike. One important property of nearfields is the distributivity of their multiplication operation over addition, which has led to the study of distributive elements in nearfields.
In this thesis we are going to study the set $D(\alpha, \beta)$ and investigate some condition on $\alpha$ and $\beta$ so that $D(\alpha,\beta)$ can be a subflied of $\mathbb{F}_{q^{n}}$ where $(q, n)$ is a Dickson pair.
\subsection{Research problem}

The distributive elements of a nearfield have been studied by many researchers, but there are still several open problems and unanswered questions regarding their properties and behavior. In particular, the structure and properties of the generalized set of distributive elements, which is a subset of the nearfield that contains all distributive elements, have not been fully understood.
\subsection{Research objectives}

The distributive elements of a nearfield have been studied by many researchers, but there are still several open problems and unanswered questions regarding their properties and behavior. In particular, the structure and properties of the generalized set of distributive elements, which is a subset of the nearfield that contains all distributive elements, have not been fully understood.
\subsection{Reseach objectives}
The main objective of this thesis is to investigate the properties and behavior of distributive elements in nearfields, with a focus on the generalized set of distributive elements. Specifically, we aim to:
\begin{itemize}
	\item[$\bullet$] Define and characterize the generalized set of distributive elements.
	\item[$\bullet$]Study the structure and properties of this set.
	
	\item[$\bullet$]Provide some insights and results related to distributive elements in nearfields.
\end{itemize}

\subsection{Plan of the thesis}
The thesis is structured as follows:

\begin{itemize}
	\item[$\bullet$] \textbf{Chapter 2} provides the necessary background and definitions related to nearfields and distributive elements.
	\item[$\bullet$]	\textbf{Chapter 3} constructs a finite Dickson nearfield, which serves as an important example for our study.
	\item[$\bullet$]\textbf{Chapter 4} defines the generalized set of distributive elements and studies its properties.
	\item[$\bullet$]	\textbf{Chapter 5} Gives some  details on  applications in codding theory.
	Finally, 
	\item[$\bullet$] \textbf{Chapter 6} concludes our study and suggests some directions for future research.
\end{itemize}
Nearfields and near-rings are related to many other structures and needed for
several representation theorems. Therefore it is important to gain knowledge about
the structure of near-rings and nearfields and to find construction methods. The first
examples of proper nearfields were constructed by L.E. Dickson 1905, they were
finite (\cite{karzel1980some}).
\newpage

\section{Definitions and preliminary results}
In this chapter, we are going to  give some important definitions on 
nearfields and present some important  results. We begin by defining some elementary structures. These are standard definitions. They can be found in most elementary algebra books. A nearfield is considered by Dickson as a field with only  one distributive law. Therefore, we start by introducing fields and nearfiels.
\subsection{Fields  and nearfields}
\begin{defn}
	A \emph{field} $ \mathbb{F}$ is defined by giving a set $ \mathbb{F} $ with two  binary operations $ "+" $ and $ "\cdot" $ on $ \mathbb{F}$,	i.e two maps \begin{align*}
	& +:\mathbb{F}\times \mathbb{F}\longrightarrow \mathbb{F},\\
	& \cdot :\mathbb{F}\times \mathbb{F}\longrightarrow \mathbb{F},
	\end{align*}
	subject to the the axioms (\cite{murphy2006course}):
	\begin{itemize}
		\item[$ (A_{1}) $]  Addition is associative,
		
		for all $ a, b, c \in \mathbb{F} $, $ (a+b)+c=a+(b+c).$
		\item[($ A_2 $)] Addition is commutative,
		
		for all   $ a, b \in \mathbb{F} $, $a+b=b+a.$
		\item[$ (A_{3}) $]  Addition  has an identity element,
		
		for all $ a\in \mathbb{F} $, $ a+0=0+a=a.$
		\item[$ (A_{4}) $] Each element has its inverse with respect to addition,
		
		for all $ a \in \mathbb{F} $, $\exists b\in \mathbb{F}$ such that $a+b=0.$
		\item[$ (A_{5}) $]  Multiplication  is associative,
		
		for all $ a, b, c \in \mathbb{F} $, $ (ab)c=a(bc).$
		\item[$ (A_{6}) $]  Multiplication is commutative,
		
		for all $ a, b \in \mathbb{F} $, $ ab=b.a$
		\item[$ (A_{7}) $]  Multiplication has an identity element,
		
		for all $ a\in \mathbb{F} $, $ a.1=a.$
		\item[$ (A_{8}) $] Each non-zero element has an inverse with respect to multiplication,
		
		for all $ a \in \mathbb{F}^{*} $,  $\exists b  \in \mathbb{F}$ such that $ ab=1.$
		\item[$ (A_{9}) $] Multiplication is distributive over addition,
		
		for all $ a, b, c \in \mathbb{F} $, $ a(b+c)=ab+ac.$
	\end{itemize}
\end{defn}
\newpage

\begin{pro}\cite{murphy2006course}

	Suppose $\mathbb{F}$ is a field. Then 
	\begin{enumerate}
		\item For each $ a, b\in \mathbb{F}$, the equation $ a+x=b $ has a unique solution.
		
		\item For each $ a, b\in \mathbb{F}$, the equation $ ay=b $ has a unique solution.
	\end{enumerate}
\end{pro}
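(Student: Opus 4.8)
The plan is to treat both parts in parallel, since each asserts the existence and uniqueness of a solution to a one-variable equation, and each is really a statement about the group structure hidden inside the field axioms: part~1 lives in the additive group $(\mathbb{F},+)$ furnished by axioms $(A_1)$--$(A_4)$, while part~2 lives in the multiplicative group $(\mathbb{F}^{*},\cdot)$ furnished by $(A_5)$, $(A_7)$ and $(A_8)$. For each part I would first exhibit an explicit solution (existence) and then show that any two solutions coincide (uniqueness) by a cancellation argument.

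For part~1, given $a,b\in\mathbb{F}$, let $-a$ denote the additive inverse guaranteed by $(A_4)$ and set $x=(-a)+b$. Using associativity $(A_1)$, the inverse property $(A_4)$, and the additive identity $(A_3)$ gives $a+x=a+((-a)+b)=(a+(-a))+b=0+b=b$, so a solution exists. For uniqueness, if $a+x_1=b=a+x_2$, I would add $-a$ on the left of both sides and simplify by $(A_1)$, $(A_4)$, $(A_3)$ to conclude $x_1=x_2$.

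Part~2 is formally identical once one passes to multiplication, but it carries a caveat that is the only genuinely delicate point: the statement can hold only when $a\neq 0$, for if $a=0$ then $ay=0$ for every $y$, so $0\cdot y=b$ has no solution when $b\neq 0$ and infinitely many when $b=0$. Assuming $a\in\mathbb{F}^{*}$, I would take the multiplicative inverse $a^{-1}$ from $(A_8)$ and set $y=a^{-1}b$; then $(A_5)$, $(A_8)$, and $(A_7)$ (together with commutativity $(A_6)$ to place the identity on the correct side) give $ay=(aa^{-1})b=1\cdot b=b$. Uniqueness follows exactly as before by multiplying $ay_1=ay_2$ on the left by $a^{-1}$ and simplifying with $(A_5)$, $(A_8)$, $(A_7)$. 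The main obstacle here is thus not computational but conceptual, namely recognizing and recording the $a\neq 0$ restriction in part~2; once that is in place, every remaining step is a direct invocation of the field axioms.
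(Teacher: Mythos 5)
Your proof is correct and is the standard cancellation argument; the paper itself supplies no proof of this proposition (it simply cites Murphy's course notes and moves on), so yours is exactly the argument the cited reference intends, with existence via $x=(-a)+b$ and $y=a^{-1}b$ and uniqueness via cancellation. Your one substantive addition is the observation that part 2 requires $a\neq 0$: as printed in the paper the claim is false for $a=0$ (the equation $0\cdot y=b$ has no solution when $b\neq 0$ and every $y$ as a solution when $b=0$), so your caveat corrects a genuine misstatement in the statement rather than introducing a gap in the proof.
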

%
%
In the next sections, we will need to define a finite field. Therefore we consider the following definitions.
\begin{defn}
	$\mathbb{F}$ is a \emph{finite field} or \emph{Galois field}  if it is a field that contains a finite number of elements. As with any field, a finite field is a set on which the operations of multiplication and addition are defined and satisfy certain basic rules as $(A_{1})$  to $(A_{9})$ (\cite{mullen2013handbook}).
	The most common examples of finite fields are given by $\mathbb{Z}/p\mathbb{Z}$ when p is a prime number. We have the following
	\begin{thm}
		Suppose $ \mathbb{F} $ is a finite field of characteristic $p$. Then $ \mathbb{F} $ contains $p^{n}$ elements for some n: $\mid \mathbb{F}\mid= p^{n}$ (\cite{murphy2006course}).
	\end{thm}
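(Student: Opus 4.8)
The plan is to exhibit $\mathbb{F}$ as a finite-dimensional vector space over its prime subfield and then simply count its elements. First I would identify that prime subfield. Since $\mathbb{F}$ is a field it has no zero divisors, so its characteristic $p$ must be a prime; the ring homomorphism $\mathbb{Z} \longrightarrow \mathbb{F}$ determined by $1 \mapsto 1_{\mathbb{F}}$ then has kernel exactly $p\mathbb{Z}$, and its image is a subfield of $\mathbb{F}$ isomorphic to $\mathbb{Z}/p\mathbb{Z} = \mathbb{F}_p$. I would fix this copy of $\mathbb{F}_p$ inside $\mathbb{F}$ once and for all.

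Next I would regard $\mathbb{F}$ as a vector space over $\mathbb{F}_p$. The additive group of $\mathbb{F}$ supplies the vector addition, whose properties are exactly the field axioms $(A_1)$–$(A_4)$, while scalar multiplication is the field multiplication restricted to $\mathbb{F}_p \times \mathbb{F}$. The module axioms are then inherited from $(A_5)$, $(A_7)$ and the distributive law $(A_9)$. Because $\mathbb{F}$ is finite, it is certainly spanned over $\mathbb{F}_p$ by finitely many elements (indeed by itself), and from any finite spanning set one extracts a basis $\{v_1, \dots, v_n\}$; I take $n$ to be this dimension.

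Finally I would count. Every element of $\mathbb{F}$ is expressible in a unique way as $c_1 v_1 + \cdots + c_n v_n$ with each $c_i \in \mathbb{F}_p$, uniqueness coming from linear independence of the basis. There are exactly $p$ choices for each of the $n$ coefficients, and distinct coefficient tuples yield distinct elements, so the map from $\mathbb{F}_p^{\,n}$ to $\mathbb{F}$ is a bijection and $|\mathbb{F}| = p^n$.

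The main obstacle is not the counting but the foundational step: one must genuinely justify that the characteristic is prime, invoking the absence of zero divisors in a field, and that the resulting copy of $\mathbb{F}_p$ is a subfield over which $\mathbb{F}$ carries a well-defined vector-space structure. Once that structure is secured, the passage from finiteness to a finite basis and the final enumeration are entirely routine.
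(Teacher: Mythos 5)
Your proof is correct and takes essentially the same route as the paper's: regard $\mathbb{F}$ as a finite-dimensional vector space over its prime subfield, choose a basis $\{e_{1},\dots,e_{n}\}$, and count the $p^{n}$ linear combinations. You are in fact more careful than the paper, which simply asserts the vector-space structure and the count, whereas you justify the existence of the copy of $\mathbb{F}_{p}$ inside $\mathbb{F}$ and invoke uniqueness of basis representations to make the enumeration a genuine bijection.
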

	\begin{proof}
		Let us suppose that $\mathbb{F}$ has dimension $n$ over $p$. It means that $\mathbb{F}$ is considered as a vector space. Then we can find a basis 
		\begin{equation}\label{basis}
		\left\lbrace e_{1}, e_{2}, \cdots, e_{n} \right\rbrace 
		\end{equation} for $\mathbb{F}$ $p$.
		Every element $x\in \mathbb{F}$ can be expressed as a linear combination  of the basis (\ref{basis}).

		\begin{equation*}
		x=\lambda_{1}e_{1}+\lambda_{2}e_{2}+\dots+\lambda_{n}e_{n}
		\end{equation*}
		There are $p$ choices for each $\lambda_{i}$, so the total number of elements in $\mathbb{F}$ is 
		$ \overbrace{p\cdot p\cdot p\dots p}^{\text{n times}}= p^{n}$ .
		
	\end{proof}
\end{defn}
The order of a finite field is its number of elements, which is either a prime number or a prime power. For every prime number $q$ and every positive integer n there are fields of order 
$q^n$, all of which are isomorphic. The finite field of order $q^n$ is denoted by $\mathbb{F}_{q^{n}}$ (\cite{mullen2013handbook}).

\begin{defn}
	Let us consider the set $ R$ with two binary operations $ + $ and  $\cdot$     denoted  by  $(R, + , \cdot).$ 
	If 
	\begin{itemize}
		\item[(i)] $ (R, + ) $ is a group
		\item[(ii)] $ (R, \cdot ) $ is a semigroup
		\item[(iii)] $ a(b+c)=ab+ac, \qquad \forall a, b, c \in R,$ 
	\end{itemize}  
	then $ (R, + , \cdot) $ is a near-ring (left).
	
	Moreover, if $(R^{*}, \cdot)  $ is a group, then  $(R, + , \cdot) $  is a \emph{nearfield (left)} where $  R^{*}=R-\left\lbrace 0\right\rbrace $ (\cite{djagba2020generalized}). 
\end{defn} 
We abbreviate $ (R,+, \cdot) $ by $ R $ when the operations are clearly understood
and omit the symbol ''$ \cdot$'' for multiplication if no confusion is possible.
We have the following example.
\begin{exa}\label{lemera} 
	Let $ (G,+) $ be a group. Then $ (M(G),+, \circ) $ is a nearring under pointwise
	addition and composition
	\begin{equation*}
	(x)(f+g)=(x)f+(x)g,
	\end{equation*}
	and \begin{equation*}
	(x)(f\circ g)=((x)f)g,
	\end{equation*}
	where
	\begin{equation}
	M(G): =\left\lbrace f: G\longrightarrow  G\right\rbrace 
	\end{equation}
	is the set of all mappings on $G$. 
	And it is easy to show that $ (M(G),+, \circ) $ is a left near ring (the left distributivity holds)(\cite{howell2007contributions}).
	
	\textbf{Claim}: \emph{We want to prove that $M(G)$ is a left near ring}
	\begin{proof}
		Let us consider   three maps $f, g, h$ which belong to $M(G)$.
		\begin{itemize}
			\item[$\bullet$] Let us  define a mapping $(x)\zeta: = 0$ for all $x\in G$, then $\zeta$ is an element of $M(G)$ $\Rightarrow M(G)\neq \emptyset$.
			\item[$\bullet$] Let us show that $M(G)$ is a group.
			\begin{align*}
			(x)((f+g)+h)&=(x)(f+g)+(x)h\quad (\text{by definition})\\
			&=(x)f+(x)g+(x)h\quad (\text{by definition})\\
			&=(x)f+(x)(g+h)\\
			&=(x)(f+(g+g)), \quad \text{for all $ x\in G$}.
			\end{align*}
			We can see that $(M(G), +)$ is a semigroup.
			
			Fr all $f\in M(G)$, there exists $-f\in M(G)$ such that $(x)f+(x)(-f)=0$,  $(x)(-f)= -(x)f$ for all $x\in G$.
			We define the mapping $-f: G\longrightarrow G$ by $(x)(-f)=-(x)f$ for all $x\in G$.
			\begin{align*}
			(x)(\zeta+f)&=(x)\zeta+(x)f\\
			&=0+(x)f\\
			&=(x)f \quad \text{and}\\
			(x)(f+\zeta)&=(x)f+(x)\zeta\\
			&=(x)f+0\\
			&=(x)f.
			\end{align*}
			It follows that
			\begin{align*}
			(x)(f+(-f))&=(x)f-(x)f\\
			&=(x)\zeta\\
			&=0 \quad \text{and}\\
			(x)((-f)+f)&=(x)(-f)+(x)f\\
			&=-(x)f+(x)f\\
			&=(x)\zeta\\
			&=0
			\end{align*}
			Hence for any $f\in M(G)$, $\zeta+f=f+\zeta= f$ and $f+(-f)= (-f)+f= \zeta$.
			Therefore $(M(G), +)$ is a group. 
			\item[$\bullet$] $(M(G), \circ)$ is a semigroup. Then,
			\begin{align*}
			(x)(f\circ(g\circ h))&=((x)f)(g\circ h)\\
			&=((x)f)\left[(g)h \right]\\
			&=\left[((x)f)g \right]h\\
			&=\left[ (x)(f\circ g)\right]h\\
			&=(x)\left[(f\circ g)\circ h \right].   
			\end{align*}
			Hence $(M(G), \circ)$ is a semigroup.
			
			\item[$\bullet$] The composition distributes over pointwise  addition  in one direction in $M(G)$.
			
			For all $f, g, h\in M(G)$
			\begin{align*}
			(x)(f\circ (g+h))&=(x)(f\circ g)+(x)(f\circ h)\\
			\text{In fact,} (x)\left[f\circ (g+h) \right]\\
			&=((x)f)(g+h)\\
			&=((x)f)g+((x)f)h\\
			&=(x)(f\circ g)+ (x)(f\circ h) , \quad \text{for all $x\in G$}.
			\end{align*} Hence the left  distributive low holds. We conclude that $M(G)$ with addition and the function composition $"\circ"$  a near ring.
			In fact the right distributive low failed. We can see that in considering $a, b, c$ all different from zero and for all $x\in G$ we define the maps $h_{a}, h_{b}, h_{c}$ as follow.
			\begin{align*}
			h_{a}:& G\longrightarrow G\\
			&x\longrightarrow (x)h_{a}=a,\\
			h_{b}:& G\longrightarrow G\\
			&x\longrightarrow (x)h_{b}=b,\\
			h_{c}:& G\longrightarrow G\\
			&x\longrightarrow (x)h_{c}=c.\\
			\end{align*}
			Let us check if 
			\begin{equation}
			(x)((h_{a}+h_{b})\circ h_{c})=(x)(h_{a}\circ h_{c})+(x)(h_{b}\circ h_{c})
			\end{equation}
			In fact, \begin{align*}
			(x)((h_{a}+h_{b})\circ h_{c})&=((x)(h_{a}+h_{b})h_{c}\\
			&=((x)h_{a}+(x)h_{b})h_{c}\\
			&=(a+b)h_{c}\\
			&=c.
			\end{align*}
			Also, \begin{align*}
			(x)(h_{a}\circ h_{c})+(x)(h_{b}\circ h_{c})&=((x)h_{a}h_{c})+((x)h_{b})h_{c}\\
			&=(a)h_{c}+(b)h_{c}\\
			&=c+c.
			\end{align*}
		\end{itemize}
		Since $c\neq 0$, $c\neq c+c$.
		From this claim, we conclude that not every near ring is a ring but every ring is necessarily  a near ring.
		Furthermore, let $y, y\in G$, $y\neq z$ and $f, g\in M(G)$ such that
		\begin{equation*}
		\begin{cases}
		(x)f_{y}:=y,\\
		(x)g_{z}:=z.
		\end{cases}
		\end{equation*}
		We have to check if $(x)((f_{y}+g_{z})\circ h)= (x)(f_{y}\circ h)+ (x)(g_{z}\circ h)$.
		In fact, 
		\begin{align*}
		(x)((f_{y}+g_{z})\circ h)&=((x)(f_{y}+g_{z})h\\
		&=((x)f_{y}+(x)g_{z})h\\
		&=(y+z)h.
		\end{align*}
		Also, 
		\begin{align*}
		(x)(f_{y}\circ h)+ (x)(g_{z}\circ h)&=((x)f_{y})h+((x)g_{z})h\\
		&=yh+zh.
		\end{align*}
		If $G$ contains more than one element, then the right distributive low does not hold in $M(G)$ and not all mappings  in $M(G)$ are endomorphism. For $(x)((f+ g)\circ h)= (x)(f\circ h)+ (x)(g\circ h)$ to hold, $h$ must be an endomorphism in this case.
		
		Then from this claim, every ring is near-ring but the inverse is not always true,  so we have the following.
	\end{proof}
\end{exa}
\begin{defn}
	A proper nearfield is a nearfield that is not a field.
\end{defn}
\begin{thm}(\textbf{Zassenhaus})
	The additive group of a nearfield  is  abelian.
\end{thm}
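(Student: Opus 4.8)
The plan is to squeeze everything out of the single (left) distributive law by studying the left multiplications, then to reduce the whole statement to one commutation relation, and finally to a fixed-point-free involution. First I would record the elementary consequences of $a(b+c)=ab+ac$: from $a\cdot 0=a(0+0)=a\cdot 0+a\cdot 0$ we get $a\cdot 0=0$, and from $0=a(b+(-b))=ab+a(-b)$ we get $a(-b)=-(ab)$, so in particular $x(-1)=-x$. Then for each $c\in R^{*}$ the map $L_{c}\colon x\mapsto cx$ is an endomorphism of $(R,+)$, since $c(x+y)=cx+cy$, and it is bijective because $c$ is invertible; hence $L_{c}$ is an automorphism of $(R,+)$. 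Moreover $cx=x$ with $x\neq 0$ forces $c=1$, so $L_{c}$ is fixed-point-free for $c\neq 1$ and $R^{*}$ acts \emph{sharply transitively} on $R^{*}$ by left multiplication.

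Second, I would reduce additive commutativity to a single relation. For $a,b\in R^{*}$ set $t=b^{-1}a$; left distributivity gives $a+b=b(t+1)$ and $b+a=b(1+t)$, so after cancelling $b$ (that is, applying the injective $L_{b}^{-1}$) we find $a+b=b+a$ iff $t+1=1+t$. Since $t=b^{-1}a$ ranges over all of $R^{*}$ and the cases involving $0$ are trivial, $(R,+)$ is abelian iff $1+t=t+1$ for every $t$, i.e. iff $1$ lies in the additive centre $Z$ of $(R,+)$. I would then note that $Z$ is carried into itself by every $L_{c}$: if $z\in Z$ then $cz+cy=c(z+y)=c(y+z)=cy+cz$, and $cy$ sweeps out all of $R$, so $cz\in Z$. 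Being an $L$-invariant subgroup while $R^{*}$ acts transitively on $R^{*}$, $Z$ must be either $\{0\}$ or all of $R$; and if some $z\neq 0$ lies in $Z$ then $1=z^{-1}z\in Z$. Thus the theorem is equivalent to exhibiting one nonzero additively central element.

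Now split on the characteristic. If $1+1=0$ then $x+x=x\cdot 1+x\cdot 1=x(1+1)=x\cdot 0=0$ for all $x$, so every element is its own additive inverse; a group in which each element is an involution is abelian, which settles this case. If $1+1\neq 0$ then $-1\neq 1$, $(-1)^{2}=1$, and $\lambda:=L_{-1}$ is a fixed-point-free automorphism of $(R,+)$ with $\lambda^{2}=\mathrm{id}$. The plan is to prove that $\lambda$ is additive inversion, i.e. $(-1)x=-x$ for all $x$: granting this, for all $a,b$ one computes $(-a)+(-b)=(-1)a+(-1)b=(-1)(a+b)=-(a+b)=(-b)+(-a)$, and replacing $a,b$ by $-a,-b$ gives $x+y=y+x$. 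To obtain $(-1)x=-x$ I would study $\theta(x):=-x+(-1)x$. A short computation using $(-1)(-1)=1$ and $x(-1)=-x$ gives $\lambda(\theta(x))=-\theta(x)$, so every value of $\theta$ is inverted by $\lambda$; and $\theta$ is injective, since $\theta(r)=\theta(s)$ forces $s-r=\lambda(s-r)$, hence $s-r=0$ by fixed-point-freeness. It therefore suffices to prove that $\theta$ is surjective, for then every element of $R$ is inverted by $\lambda$.

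The surjectivity of $\theta$ is the crux and carries essentially all the difficulty. When $R$ is finite it is automatic, an injective self-map of a finite set being onto, so $\lambda$ inverts everything; but I want to avoid any cardinality hypothesis. For a general nearfield I would argue through the sharply $2$-transitive group $\Gamma=\{\,x\mapsto cx+d : c\in R^{*},\,d\in R\,\}$ attached to $R$: its translations form a regular normal subgroup isomorphic to $(R,+)$, the stabiliser of $0$ is $R^{*}$ acting by left multiplication, and by the fixed-point-freeness above $\Gamma$ is a Frobenius group with kernel $(R,+)$, the involution $-1\in R^{*}$ acting exactly as $\lambda$. Proving that this involution inverts the kernel is precisely the surjectivity of $\theta$, and it is here that the structure of $R$ as a nearfield (associativity of multiplication together with the one distributive law), rather than any finiteness, must be used. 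I expect this inversion property to be the main obstacle, because every attempt to prove $1+t=t+1$ by bare manipulation collapses into self-referential identities of the shape $1+t-1-t=-t\,(1+t^{-1}-1-t^{-1})$, which are invariant under $t\mapsto t^{-1}$ and therefore yield only tautologies; the symmetry can be broken only by the global involution $\lambda$, which is exactly what the Frobenius-kernel argument supplies.
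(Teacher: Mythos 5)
Your reduction is correct and cleanly executed: the elementary identities, the observation that left multiplications are fixed-point-free automorphisms of $(R,+)$, the reduction of commutativity to $1+t=t+1$, the invariance of the additive centre under left multiplication, the characteristic-two case, and the finite case (injectivity of $\theta(x)=-x+(-1)x$ plus finiteness gives surjectivity, hence $L_{-1}$ is inversion and addition is commutative) are all sound. Note first that the paper itself contains no proof to compare against --- it states the theorem and defers to Pilz's book --- and second, more importantly, that your argument does not prove the theorem as stated: the statement concerns arbitrary nearfields, including infinite ones, and your treatment of the crux (surjectivity of $\theta$) covers only the finite case, as you yourself acknowledge.

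The gap is genuine and cannot be closed by the route you sketch. The Frobenius-group argument you appeal to --- that the fixed-point-free elements of the sharply $2$-transitive group $\Gamma$ together with the identity form a regular normal subgroup which the involution must then invert --- rests on Frobenius's theorem, a strictly finite-group result (its known proofs use character theory or counting); for infinite permutation groups the ``Frobenius kernel'' need not be a subgroup at all, and there exist infinite sharply $2$-transitive groups with no regular normal subgroup whatsoever (Rips--Segev--Tent). In the nearfield case the translations do form a regular normal subgroup, but the statement you actually need --- that $\lambda=L_{-1}$ inverts it, equivalently that every map $x\mapsto(-1)x+c$ has a fixed point, equivalently that $-1$ is central in $R^{*}$ --- is exactly what still has to be extracted from associativity of multiplication together with the single distributive law, and your proposal stops at announcing that this must be done rather than doing it. (Your own observation that bare manipulation only returns the tautological identity $1+t-1-t=-t\,(1+t^{-1}-1-t^{-1})$ is an accurate diagnosis of why no local computation will suffice.) That missing step is the real substance of the Zassenhaus--B.\,H.~Neumann theorem for infinite nearfields; as written, your argument establishes the theorem only for finite ones.
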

For the proof See \cite{pilz2011near}.

Note that the nearfield is a near-ring with identity
such that each non-zero element has an inverse.
Hence, each nearfield is a near-ring but the converse
is not true. For example, $(\mathbb{Z}, +, \cdot)$ where $\mathbb{Z}$ is the set of integers with usual addition ($+$) and usual ($\cdot$) multiplication is a near-ring but it is not a near-field. The symbols $0$ and $1$ will be used for the additive and multiplicative identities, respectively. In a near-ring or a nearfield $R$, $1\neq 0$ . If $1=0$, then for all $x$ we have $x=x1=x0=0$ , so $R=\left\lbrace 0\right\rbrace $
contradicting the assumption that $R$ has at least two
elements (\cite{hussein2022some} p102).

\begin{rem}
	All fields are near fields and also any
	division ring is a near field  (\cite{hussein2022some}).
\end{rem}
\begin{defn}
	A left near ring is said to be \emph{zero-symmetric} if
	$0n=0$, for all $n$ in $R$ , i.e., the left distributive law
	results in $n0=0$. The set of all zero-symmetric
	elements of $R$ is denoted by $R_{0}=\left\lbrace x\in R: 0x=0\right\rbrace $,  referred to the zero-symmetric part of $R$ . If $R=R_{0}$
	, then $R$ is  zero-symmetric. The zero-symmetric near-ring is sometimes named as C-ring (\cite{hussein2022some}).
\end{defn}
\begin{defn}(\cite{raovague})
	
	A \emph{division ring}, also called \emph{skewfield} is nontrivial ring in which division by nonzero elements is defined. Specifically, it is a nontrivial ring in which every nonzero  element $a$ has a multiplicative inverse denoted $a^{-1}$ such that $aa^{-1}=1$.
	
	So (right) division ring may be defined as $ \frac{a}{b}=ab^{-1}$  but this notation is avoided as one may have $ ab^{-1}\neq b^{-1}a. $
	
	Notice that any division ring is a nearfield and any commutative division ring is a field.
\end{defn}
\begin{defn}\label{sbnf}
	Consider a nearfield $ R $. A subset $ S $ of  $R $ is said to be a \emph{subnearfield} of  $ R $ if $(S, +)$ and $(S^{*}, \cdot)$ are both groups. If moreover $ (b+c)a=ba+ca , \forall a, b, c \in S$, then $ S $ is a subfield of $ R $ (\cite{groves1974locally}).
\end{defn}

\begin{defn}\label{vs}
	A set $V$ is said to be a \emph{(left) vector space over a nearfield  $R$}, if $(V , +)$ is an abelian
	group and, if for each $\alpha\in R$ and $v \in V$, there is a unique element $v \alpha \in V$.  Moreover  the	following conditions hold for all $\alpha, \beta \in R$ and  for all $u, v\in V$:
	\begin{itemize}
		\item[(i)] $\alpha(u+v)= \alpha u+\alpha v$;
		\item[(ii)] $(\alpha+\beta)v=\alpha v+\beta v$;
		\item[(iii)] $(\alpha\beta) v=\alpha(\beta v)$;
		\item[(iv)]  $1v=v.$
	\end{itemize}
	The members of $V$ are called vectors and the members of the division ring (nearfield) are called
	scalars. The operation that combines a scalar $\alpha$ and a vector $v$ to form the vector $\alpha v$ is
	called scalar multiplication (\cite{howell2007contributions}).
\end{defn}

\subsection{Center and kernel of a nearfield }
As we saw from the definition of nearfield we do not necessarily have the right distributive law and and commutativity of multiplication. For that reason, the following concept can be defined (\cite{djagba2020generalized}), (\cite{djagba2019contributions}).

\begin{defn}
	Let $ R $ be a nearfield. The \emph{multiplicative center} $ (R, \cdot) $   denoted by $C(R)$  is defined as follows:
	\begin{equation}
	C(R)=\left\lbrace x\in R: xy=yx, \text{$\forall y \in R$} \right\rbrace. 
	\end{equation}  
	In others words, it is the set of all elements of $R$ that commute with every element of $R$.
\end{defn}\label{def221}
Here we use $D(R)$ to express the set of all distributive elements of a nearfield  $R$. It is  defined as follow (\cite{djagba2020generalized}):

\begin{equation}\label{der}
D(R)=\left\lbrace  x\in R : (y+z)x= yx+ zx, \text{ for all $ y, z \in R$ }\right\rbrace. 
\end{equation}
\begin{rem}\label{rem}
	We can see simply that $C(R)\subset D(R)$ (\cite{pilz2011near}). 
	
	Let $\alpha$ be in $C(R)$ and $\beta, \gamma\in R.$
	By definition, we know that $\alpha\in C(R)$ implies that for all $\beta, \gamma\in R$, we have \begin{align*}
	\alpha(\beta+\gamma)&=\alpha\beta+\alpha\gamma\\
	&=\alpha\beta+\alpha\gamma\\
	&=\beta\alpha+\gamma\alpha\\
	&=(\beta+\gamma)\alpha.
	\end{align*}
	That means, $\alpha(\beta+\gamma)=(\beta+\gamma)\alpha$.
	
	Then $(\beta+\gamma)\alpha=\beta\alpha+\gamma\alpha$. Thus $\alpha\in D(R)$.
	
\end{rem}
The remark (\ref{rem}) show that $C(R)\subset D(R)$ for the usual multiplication; but it is not direct to say that $D(R)\subset C(R)$. We will see it in the next chapter for a new multiplication which was introduced by Dickson but the notion of center of a nearfield is more developed in (\cite{djagba2020center}). First, we have the next theorem to show the relationship between $D(R)$ and $R$ where $R$ is a left nearfield.
\begin{thm}\label{theo2}
	Let $R$ be a near-field (\cite{howell2007contributions}). Then
	\begin{itemize}
		\item[(a)] $D(R)$  with operation of $R$ is a skewfield (division ring), and
		\item[(b)] $R$  is a left vector space over $D(R)$.
	\end{itemize}
\end{thm}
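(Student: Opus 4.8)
The plan is to prove both parts by working inside $R$ and exploiting the only two structural facts we need: left distributivity holds throughout $R$, and the additive group $(R,+)$ is abelian by the Zassenhaus theorem quoted above. Throughout I will freely use the two elementary consequences of left distributivity that $x0=0$ and $x(-a)=-(xa)$ for all $x,a\in R$; both follow by applying $a(b+c)=ab+ac$ to the identity $0=a+(-a)$.

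For part (a) I would first check that $D(R)$ is an additive subgroup of $R$. That $0,1\in D(R)$ is immediate from $x0=0$ and $(y+z)1=y+z=y1+z1$. For closure under addition, take $a,b\in D(R)$ and evaluate $(y+z)(a+b)$ in two ways: expanding by left distributivity gives $(y+z)a+(y+z)b=ya+za+yb+zb$, whereas the target $y(a+b)+z(a+b)$ equals $ya+yb+za+zb$, and these agree exactly because $(R,+)$ is abelian, so this is the step where commutativity is indispensable. Closure under additive inverses follows from $x(-a)=-(xa)$ together with $a\in D(R)$. Next I would show $D(R)^{*}$ is a multiplicative subgroup: closure under products is a short associativity-plus-distributivity computation $(y+z)(ab)=((y+z)a)b=(ya+za)b=y(ab)+z(ab)$, so $ab\in D(R)$. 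The one genuinely clever move is closure under inverses: given $a\in D(R)^{*}$, put $u=ya^{-1}$ and $v=za^{-1}$, so that $ua=y$ and $va=z$; since $a$ is distributive we get $(u+v)a=ua+va=y+z$, and right-multiplying by $a^{-1}$ yields $(y+z)a^{-1}=u+v=ya^{-1}+za^{-1}$, i.e. $a^{-1}\in D(R)$. Once $D(R)$ is closed under all operations, the skewfield axioms come for free: associativity of both operations and the left-distributive law are inherited from $R$, while the right-distributive law holds for elements of $D(R)$ by the very definition \eqref{der}, so both distributive laws hold and $D(R)$ is a division ring (nontrivial, since $1\neq 0$).

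For part (b) I would take the vectors to be the elements of $R$ and let $D(R)$ act by multiplication on the right, $v\mapsto v\alpha$ for $\alpha\in D(R)$, in agreement with the notation $v\alpha$ of Definition \ref{vs}. The additive group $(R,+)$ is abelian, so it remains only to verify the four conditions of Definition \ref{vs} under this action. The distributivity of a scalar over a sum of vectors becomes the right-distributive identity $(u+v)\alpha=u\alpha+v\alpha$, which is exactly what membership $\alpha\in D(R)$ provides; the distributivity over a sum of scalars becomes the left-distributive identity $v(\alpha+\beta)=v\alpha+v\beta$, valid for every element of $R$; and the remaining two conditions are the associativity $(v\alpha)\beta=v(\alpha\beta)$ and the unit law $v1=v$ inherited directly from $R$.

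I expect the only non-routine move to be the proof that $D(R)$ is closed under multiplicative inverses; everything else is bookkeeping, with the single important warning that the closure of $D(R)$ under addition genuinely relies on the commutativity of $(R,+)$ and would fail for a merely associative additive structure.
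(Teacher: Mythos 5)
Your proof is correct. For part (a) it is essentially the paper's own argument: $0,1\in D(R)$, additive closure by expanding $(y+z)(a+b)$ both ways and invoking commutativity of $(R,+)$ exactly as the paper does, multiplicative closure via associativity, and closure under inverses by multiplying by $a$ and cancelling --- your formulation with $u=ya^{-1}$, $v=za^{-1}$ is a cleaner rendering of the paper's computation (which factors the cancelled element out on the wrong side, but pursues the same idea). Part (b), however, is genuinely different, and your version is the one that actually works. The paper verifies the axioms with $D(R)$ acting on the \emph{left} of $R$ (writing vectors as tuples for no clear reason); its key step asserts $(x+y)\alpha=x\alpha+y\alpha$ for $x,y\in D(R)$ and arbitrary $\alpha\in R$. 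That identity is not what membership in $D(R)$ provides: in definition (\ref{der}) the distributive element sits to the \emph{right} of the sum, so $(x+y)\alpha=x\alpha+y\alpha$ would require $\alpha\in D(R)$. Worse, the identity is false in general nearfields: in the exceptional (non-Dickson) nearfield of order $25$, whose multiplicative group is $SL(2,3)$ acting fixed-point-freely on $(\mathbb{F}_{5}^{\,2},+)$, one already has $(1+1)\alpha\neq\alpha+\alpha$ for some $\alpha$, since otherwise left multiplication by $1+1$ would be the scalar matrix $2I$, a central element of order $4$ in $SL(2,3)$, whose center has order $2$. (The identity does hold in Dickson nearfields, where $D(R)=\mathbb{F}_q$ lies in $H\cup\{0\}$ and acts by the underlying field multiplication, which is presumably why the slip goes unnoticed in the paper.) Your right action $v\mapsto v\alpha$ sidesteps all of this: the axiom $(u+v)\alpha=u\alpha+v\alpha$ is verbatim the definition of $\alpha\in D(R)$, and the remaining axioms are left distributivity, associativity and the unit law of $R$; it also agrees with the notation $v\alpha$ of Definition \ref{vs}. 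So what your route buys is a proof of (b) that is complete and correct, whereas the paper's left-sided verification rests on an unjustified, and in general false, step.
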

\begin{proof}
	
	\begin{itemize}
		\item[(a)] From the definition (\ref{sbnf}), we have to show that $(D(R), +)$ and $(D(R)^{*}, \cdot)$ are both groups.
		
		\begin{itemize}
			\item[(i)] Since $1\in D(R)$, $D(R)\neq \emptyset$ and $D(R)\subseteq R$.
			In fact, $(\alpha+\beta)\cdot 1=\alpha\cdot 1+\beta\cdot 1=\alpha+\beta$ for all $\alpha, \beta \in R$.
			\item[(ii)] Let $x$ and $y$ be elements of $D(R)$ and $\alpha, \beta$ elements of $R$. 
			We say that $x+y$ belong to $D(R)$ if $(\alpha+\beta)(x+y)=\alpha(x+y)+\beta(x+y)$ for all $\alpha, \beta \in R$.
			Then, 
			\begin{align*}
			(\alpha+\beta)(x+y)&=\gamma(x+y) \quad \text{ where $  (\alpha+\beta)=\gamma $}\\
			&=\gamma x+\gamma y\quad\text{(by definition of righr nearfield)}\\
			&=(\alpha+\beta)x+(\alpha+\beta) y\\
			&=\alpha x+\beta x+\alpha y+\beta y\quad \text{because $x, y \in D(R)$}\\
			&=\alpha x+\alpha y+\beta x+\beta y \quad \text{(because the additive group of a nf is abelain)}\\
			&=(\alpha x+\alpha y)+(\beta x+\beta y)\quad \text{because $'+'$ is associative in $R$}\\
			&=\alpha(x+ y)+\beta(x+y).
			\end{align*} 
			Hence $x+y \in D(R)$ and $(D(R), +)$ is a subgroup of $(R, +)$.
			\item[(iii)] $(D(R)^{*}, \cdot)$ is a subgroup of $(R^{*}, \cdot)$ 
			
			Let  $x\in D(R)^{*}$ and consider $x^{-1}$, then
			\begin{align*}
			\left[ (\alpha+\beta)x^{-1} \right] x&=(\alpha x^{-1}+\beta x^{-1} )x\\
			&=(\alpha x^{-1})a+(\beta x^{-1} ) x\\
			\end{align*}
			\begin{align*}
			\text{Which implies that} \left[ (\alpha+\beta)x^{-1} \right] x-(\alpha x^{-1})x+(\beta x^{-1} ) x&=0\\
			x\left[(\alpha+\beta) x^{-1}-(\alpha x^{-1}+\beta x^{-1}) \right] &=0\\
			x\neq 0 \Rightarrow (\alpha+\beta) x^{-1}-(\alpha x^{-1}+\beta x^{-1})&=0\\
			\Rightarrow (\alpha+\beta) x^{-1}&= \alpha x^{-1}+\beta x^{-1},
			\end{align*}
			\text{for all $\alpha, \beta \in R$}.
			So $x^{-1}\in D(R)$.
			\item[(iv)] $D(R)$ is closed under multiplication.
			Let $x, y \in D(R)$. Then we want to show that $(\alpha+\beta)(xy)=\alpha xy+\beta xy$ for all $\alpha, \beta \in R.$
			
			In fact, 
			\begin{align*}
			(\alpha+\beta)xy&=\left[(\alpha+\beta)x \right] y\\
			&=\left[ (\alpha x)+ (\beta x)\right] y \quad\text{by deinition of right neafield} \\
			&=(\alpha x)y+ (\beta x)y\\
			&=\alpha xy+\beta xy\quad \text{because $x, y \in D(R)$}.
			\end{align*}
			Hence $xy \in D(R)$. Therefore $ (D(R)^{*}, \cdot) $ is a subgroup of $R$.
			And then $D(R)$ is a subnearfield of $R$
			\item[(v)] For all $x, y, z\in D(R)$, $(x+y)z=xz+yz$ and $x(y+z)=xy+xz$ are satisfied (they are shown in the previous steps).
			We conclude that $D(R)$ is a skewfield (division ring).
		\end{itemize}
		\item[(b)] Using the definition (\ref{vs}), we have:
		\begin{itemize}
			\item[(i)] $(R, +)$ is an abelian group.
			\item[(ii)] $\forall x\in D(R)$ and $\alpha \in R$, Let $\alpha=\alpha_{1}, \dots, \alpha_{n}$. Then
			\begin{align*}
			x(\alpha_{1}, \dots, \alpha_{n})&=(x\alpha_{1}, \dots, x\alpha_{n})\\
			&=x\alpha\in R. 
			\end{align*}
			\item[(iii)] $\forall x\in D(R)$ and $\alpha, \beta \in R$, 
			\begin{align*}
			x(\alpha_{1}, \dots, \alpha_{n}+\beta_{1}, \dots, \beta_{n})&=x\left[(\alpha_{1}+\beta_{1}),\dots, (\alpha_{n}+\beta_{n})\right] \\
			&=\left[x(\alpha_{1}+\beta_{1}),\dots, x(\alpha_{n}+\beta_{n})\right] \\
			&=x(\alpha_{1}, \dots, \alpha_{n})+x(\beta_{1}, \dots, \beta_{n})\\
			&=x\alpha+ x\beta.
			\end{align*}
			\item[(iv)] For all $x, y \in D(R)$ and for all $\alpha \in R$,
			\begin{align*}
			(x+y)(\alpha_{1}, \dots, \alpha_{n})&=x(\alpha_{1}, \dots, \alpha_{n})+y(\alpha_{1}, \dots, \alpha_{n})\\
			&=x\alpha+ y\alpha.
			\end{align*}
			\item[(v)] For all $xy \in D(R)$ and $\beta \in R$, $(xy\beta)=x(y\beta)$.
			Then $R$ is a vector space over $D(R)$.
		\end{itemize}
	\end{itemize}
\end{proof}
It is clear that every nearfield is a near ring. So $D(R)$ is a subnear ring in case that $R$ is a near ring. We have shown in \ref{lemera} that the set of mappings $M(G)$ is a near ring where only the left distributive law holds.In that case we define a new set $D(M(G))$ of all distributive maps where the right distributive low holds. The set of all distributive maps for all $x\in G$ is defined and denoted as follow.

\begin{equation}
D(M(G)):=\left\lbrace h\in M(G): (x)((f+g)\circ h) =(x)(f\circ h)+ (x)(g\circ h), \quad\text{for all}\quad f, g\in M(G)\right\rbrace. 
\end{equation}
We have shown that $M(G)$ is a near ring. In order we can show that $D(M(G))$ is a subnear ring of $M(G)$.

\textbf{Claim}: \emph{If $M(G)$ is a near ring, then $D(M(G))$ is a subnear ring of $M(G)$.} 

\begin{proof}
	\begin{itemize}
		\item[$\bullet$] Since $M(G)\neq \emptyset$, $D(M(G))\neq \emptyset$.
		\item[$\bullet$] Let $k_{1}, k_{2}\in D(M(G))$. Then for all $f, g\in M(G)$ we have
		\begin{align*}
		(x)\left[(f+g)\circ (k_{1}+k_{2}) \right]&=(x)\left[\alpha\circ(k_{1}+k_{2}) \right]\quad\text{ (we let $\alpha= (f+g)$)}\\
		&=((x))(k_{1}+k_{2})\quad\text{(by definition of $"\circ"$ )} \\
		&=((x)\alpha)k_{1} +((x)\alpha)k_{2}\quad\text{(by definition of $"\circ"$ )} \\
		&=((x)(f+g))k_{1}+((x)(f+g))k_{2}\\
		&=((x)f)k_{1}+((x)g)k_{1}+((x)f)k_{2}+((x)g)k_{2}\quad \text{( $k_{1}, k_{2} \in D(M(G))$)}\\
		&=((x)f)k_{1}+((x)f)k_{2}+((x)g)k_{1}+((x)g)k_{2}\quad\text{(M(G), +) is abelian}\\
		&=\left[((x)f)k_{1} +((x)f)k_{2}\right]+ \left[((x)g)k_{1} +((x)g)k_{2}\right]\quad\text{+ is associative}\\
		&=((x)f)(k_{1}+k_{2})+((x)g)(k_{1}+k_{2})\\
		&=(x)(f\circ (k_{1}+k_{2}))+(x)(g\circ (k_{1}+k_{2})).
		\end{align*}
		Hence $k_{1}+k_{2}\in D(M(G))$. Therefore $(D(M(G)), +)$ is a subgroup of $(M(G), +)$. Since  $(M(G), +)$, $(D(M(G)), +)$ is also abelian.
		\item[$\bullet$] Let $k_{1}, k_{2}\in D(M(G))$. If we are able to show that $(x)\left[ (f+g)\circ (k_{1}\circ k_{2})\right]=(x)(f\circ (k_{1}\circ k_{2}))+(x)(g\circ (k_{1}\circ k_{2}))$, we conclude that $D(M(G))$  is closed under multiplication and the associativity is verified.
		In fact, 
		\begin{align*}
		(x)\left[ (f+g)\circ (k_{1}\circ k_{2})\right]&=((x)(f+g))(k_{1}\circ k_{2})\\
		&=((x)f+(x)g)(k_{1}\circ k_{2})\\
		&=(((x)f+(x)g)k_{1})k_{2}\quad\text{(by definition of $\circ$)}\\
		&=(((x)f)k_{1}+((x)g)k_{1})k_{2}\\
		&=\left((x)(f\circ k_{1})+(x)(g\circ k_{1}) \right)k_{2}\\
		&=\left( (x)(f\circ k_{1})\right) k_{2}+\left((x)(g\circ k_{1} \right)k_{2}\\
		&=((x)f)(k_{1})k_{2}+  ((x)g)(k_{1})k_{2}\\
		&=(x)(f\circ (k_{1}\circ k_{2} ))+ (x) (g\circ (k_{1}\circ k_{2} )).
		\end{align*}
		Hence $(x)(k_{1}\circ k_{2})\in D(M(G))$. Therefore $(D(M(G)), \circ)$ is a sub-semi-group of $(M(G), \circ)$. 
		\item[$\bullet$] For all $(x)k\in (D(M(G))^{*}, \circ )$, there exists $(x)k^{-1}$ such that $(x)(k\circ k^{-1})= x$.
		So, 
		\begin{align*}
		(x)\left\lbrace \left[(f+g) \circ k^{-1}\right]\circ k\right\rbrace&=(x)\left[((f+g)k^{-1})\circ k \right]  \\
		&=\left(((x)f+ (x)g )k^{-1}\right) k\\
		&=\left(((x)(f))k^{-1} +((x)(g))k^{-1}\right)k\\
		&= ((x)(f\circ k^{-1})+(x)(g\circ k^{-1}))k.  
		\end{align*}
		Since $(x)k\neq 0$, we have 
		\begin{align*}
		(x)\left\lbrace \left[(f+g) \circ k^{-1}\right]\circ k\right\rbrace-&((x)(f\circ k^{-1})+(x)(g\circ k^{-1}))k=0\\
		\Leftrightarrow((x) ((f+g) \circ k^{-1}))k&=((x)(f\circ k^{-1})+(x)(g\circ k^{-1}))k\\
		\Leftrightarrow (x)((f+g) \circ k^{-1})&=(x)(f\circ k^{-1})+(x)(g\circ k^{-1}).
		\end{align*}
		Then $(x)k^{-1}\in D(M(G))$.Hence $(D(M(G))^{*},\circ )$ is a subgroup of $(M(G)^{*}, \circ)$. Therefore $(D(M(G)), +,\circ )$ is a sub-near ring of $M(G)$.
	\end{itemize}
\end{proof}
\begin{lem}
	Every nearfield $  R$ contains  a  commutative subfield $ \mathbb{F} $ (there is (possibly) different sub-near-fields in  $  R$)(\cite{pilz2011near}).
\end{lem}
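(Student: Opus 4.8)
The plan is to locate the desired commutative subfield inside the division ring $D(R)$, whose existence and structure are already guaranteed by Theorem \ref{theo2}(a). Since that theorem tells us $D(R)$ is a genuine skewfield, \emph{both} distributive laws hold in $D(R)$, so inside $D(R)$ I may argue with ordinary ring-theoretic tools that are unavailable in $R$ itself, where only the left distributive law is assumed.

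First I would isolate the candidate. The cleanest canonical choice is the multiplicative center
\[
\mathbb{F} := Z(D(R)) = \{\,a \in D(R) : ab = ba \text{ for all } b \in D(R)\,\}.
\]
I would check, as in the standard computation for division rings, that $\mathbb{F}$ is closed under $+$ and $\cdot$, contains $0$ and $1$, and that each nonzero $a \in \mathbb{F}$ has its inverse $a^{-1}$ again in $\mathbb{F}$ (from $ab = ba$ one multiplies on both sides by $a^{-1}$ to obtain $b a^{-1} = a^{-1} b$). Thus $(\mathbb{F}, +)$ and $(\mathbb{F}^{*}, \cdot)$ are groups and multiplication on $\mathbb{F}$ is commutative, so $\mathbb{F}$ is a commutative field, and it is nontrivial since it already contains $0$ and $1$.

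Then I would promote this to a subfield of $R$ in the sense of Definition \ref{sbnf}. Because $\mathbb{F} \subseteq D(R) \subseteq R$ with the operations inherited from $R$, and because $\mathbb{F}$ is a field, in particular the two-sided distributive law $(b+c)a = ba + ca$ holds on $\mathbb{F}$; this is exactly the extra condition that Definition \ref{sbnf} requires beyond being a sub-near-field. Hence $\mathbb{F}$ is a commutative subfield of $R$, and the proof is complete. As an alternative that makes the characteristic explicit, one may instead take the prime subfield of $D(R)$: the sub-(division-)ring generated by $1$ has integer multiples $n\cdot 1$ that are central in any ring, so this prime field is commutative and equals $\mathbb{F}_p$ when $\operatorname{char} R = p$ and $\mathbb{Q}$ when $\operatorname{char} R = 0$ (the inverses $(n\cdot 1)^{-1}$ existing because $D(R)$ is a division ring).

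The step I expect to be the main obstacle, or at least the one most easily overlooked, is the passage from $R$ to $D(R)$: one must resist computing directly in $R$, where the identity $(n\cdot 1)x = n\cdot x$ needed for centrality cannot be obtained, since the right distributive law fails there in general. Carrying out the whole argument inside $D(R)$, where Theorem \ref{theo2}(a) has restored full distributivity, is precisely what makes the routine division-ring facts applicable; verifying that the inverse of a central element remains central is the only mildly nontrivial computation.
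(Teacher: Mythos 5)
Your proof is correct, but it takes a genuinely different route from the paper, which in fact offers no argument for this lemma at all: after the statement it merely lists the two conditions a subnearfield must satisfy ($(\mathbb{F},+)$ a group, $(\mathbb{F}^{*},\cdot)$ a group) and defers to \cite{pilz2011near}. Your argument is complete modulo Theorem \ref{theo2}(a): once $D(R)$ is known to be a division ring under the operations inherited from $R$, its center $Z(D(R))$ (or its prime subfield) is a commutative field containing $0$ and $1$ with $1\neq 0$, closed under inverses by the computation you indicate, and it satisfies commutativity and the right distributive law simply because these identities pass to subsets; hence it is a commutative subfield of $R$ in the sense of Definition \ref{sbnf}. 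What your route buys is self-containedness within the paper, whereas the paper's citation hides the whole content. One refinement to your closing caution: in a left nearfield the identity $x(n\cdot 1)=n\cdot x$ \emph{is} available (it uses only the left distributive law), and together with abelianness of $(R,+)$ (Zassenhaus) it gives $(y+z)(n\cdot 1)=n\cdot(y+z)=n\cdot y+n\cdot z=y(n\cdot 1)+z(n\cdot 1)$ for all $y,z\in R$, i.e.\ $n\cdot 1\in D(R)$ automatically. So the prime ring generated by $1$ already lands inside $D(R)$, and the classical proof in \cite{pilz2011near} is essentially your ``alternative'' run directly in $R$; your detour through $D(R)$ is a clean packaging of that argument rather than a logical necessity.
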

We need to show that there exists a subnearfield  $ \mathbb{F} $ of $ R $ that satisfies the two following conditions.
\begin{enumerate}
	\item	We have  to  show that $ (\mathbb{F}, +) $  is a group
	\item	We have  to  show that $ (\mathbb{F}^{*}, \cdot) $  is a group (\cite{pilz2011near}).
\end{enumerate}

A  near-ring can be left or right depending on the author. There exist relationship between the additive group $M$ and the near-ring $R$. Here it is about left nearing. The we have the following.
\begin{defn}
	An additive group $(M, +)$ is called a \emph{(right) rear ring module over a (left) near ring} $R$  if there exist a mapping 
	\begin{align*}
	\phi: &M\times R\rightarrow M\\
	&(m, r)\rightarrow mr
	\end{align*} such that $m(r_{1}+r_{2})=mr_{1}+mr_{2}$ and $m(r_{1}r_{2})=(mr_{1})r_{2}$ for all $r_{1}, r_{2}\in R$ and $m\in M.$
\end{defn}

We write $M_{R}$ to denote that $M$ is a right near ring module over a left near ring $R.$

\begin{defn}
	A subset $A$  of a near ring module $M_{R}$ is called a \emph{ $R-$subgroup} if $A$ is a subgroup of $(M, +)$, and $AR=\left\lbrace ar: a\in A, r\in R \right\rbrace \subseteq A$.
\end{defn}
\begin{defn}
	A subset $H$ of a near-ring module $ M_{R} $ is called an \emph{$ R $-subgroup}
	if:
	\begin{itemize}
		\item[(i)] $ H $ is a subgroup of $ (M,+) $,
		\item[(ii)] $ HR = \left\lbrace hr : h\in  H, r\in R  \right\rbrace \subseteq H. $
	\end{itemize}
	For any left near-ring $R$ is we can construct an $R$-subgroup respect to some conditions. Therefore, we have the following. 
	\begin{defn}
		A nearring module $ M_{R} $ is said to be \emph{irreducible} if $ M_{R} $
		contains no proper $ R $-subgroups. In other words, the only $ R $-subgroups of $ M_{R} $ are
		$ M_{R} $ and $\left\lbrace 0\right\rbrace $.
	\end{defn}
\end{defn}
\begin{defn}
	A nearring module $ M_{R } $ is called \emph{strictly semi-simple} if $ M_{R } $ is a direct sum of irreducible submodules (\cite{djagba2020generalized}).
\end{defn}
So we have, the following definition.

\begin{defn}\cite{howell2007contributions}
	Suppose $ M_{R } $ and $ N_{R } $ are nearring modules. The map $\phi$
	from $ M_{R } $ into $ N_{R } $ is called an \emph{$ R $-homomorphism} if $\phi (xr)=\phi(x)r$ and $\phi(x+y)=\phi(x)+\phi(y)$ for all $x, y\in M$ and $r\in R$.
	
	If $\phi$ is bijective then, $\phi$ is called an \emph{$R$-isomorphism}.
\end{defn}
An epimorphism is a surjective homomorphism and a monomorphism is an injective
homomorphism. If a homomorphism is bijective, i.e. surjective and injective,
it is called an isomorphism. A homomorphism g from a set to itself is called an
endomorphism. If g is bijective, it is called an automorphism.

We say that $M_{R}$ is
embedded in $N_{R}$ if there exists a monomorphism from $M_{R}$ to $N_{R}$.
The set of all nearring homomorphisms from $ M_{R} $ to $N_{R}$ is denoted by $Hom(M_{R}, N_{R})$ (\cite{howell2007contributions}).

As we said at the beginning of this chapter, we did not give all details of the concepts that we need, but we gave the basic elements on a nearfield theory and some results on nearfileds an near-rings. Because we are studying the generalized set of all distributive elements of a near field,  we will see in next chapter how to construct a Dickson nearfield.

\newpage
\section{Construction of finite Dickson Nearfield}

In this chapter we are going to define a new multiplication and present the construction of a finite Dickson  nearfield. First we define Dickson  pair.
Dickson obtained the first proper nearfields in $1905$ by distoring the multiplication in a finite field. 
\subsection{Dickson Nearfield}
A Dickson nearfield is ''twisting'' of a field where we define the twisting by a Dickson pair (\cite{boykett2016multiplicative}). Now we have the following definition.
\begin{defn}
	A pair  of positive integers $(q, n)$ is said to be a \emph{Dickson pair} if the following conditions are satisfied:
	\begin{itemize}
		\item[(i)] $q$ is of the form$p^{l}$ for  some prime  $p$;
		\item[(ii)] each prime divisor of $n$ divides $q-1$;
		\item[(iii)] $ q \equiv 3 \mod 4$ implies $4$ does not divide $n$ (\cite{djagba2020generalized}).
	\end{itemize}	
\end{defn}
\begin{exa}
	$(7, 9), (4, 3), (5, 4), (19, 6)$ are all Dickson pairs.
\end{exa}
Let $ (q, n) $ be a Dickson pair and $k\in \left\lbrace 1, \dots, n \right\rbrace$; 
we denote the positive integer $\frac{q^{k}-1}{q-1}$ by $[k]_{q}$.

\begin{rem}
	\begin{itemize}
		
		\item[(i)] Let $(q, n)$ be a Dickson pair. Then $n$ divides $[n]_{q}$.
		\item[(ii)] Since every prime divisor of $n$ divides $q-1$, then $\gcd(q, n)=1$.
		\item[(iii)] $x_{n-1}\equiv [n]_{q}\mod n$ satisfies the recurrence 
		\begin{align*}
		x_{n}&\equiv q x_{n-1}+1 \mod n\\
		\Leftrightarrow 1&\equiv qx_{n-1}+1\mod n\\
		\Leftrightarrow qx_{n-1}&\equiv 0\mod n.
		\end{align*}
	\end{itemize}
	From $(iii)$, we must have that 
	\begin{align*}
	\Leftrightarrow x_{n-1}&\equiv 0\mod n.
	\end{align*}
\end{rem}
Also note that all Dickson nearfields arise by taking Dickson pair as discribed in the Theorem 8.31 (\cite{pilz2011near}. p244 ). In this thesis, the
set of  Dickson nearfields  for any Dickson pair $(q, n)$ is denoted by $DN(q, n)$, and the Dickson nearfield arising from the Dickson pair $(q, n)$ with a generator $g$ for the finite field of order $q^{n}$ is denoted by  $ DN_{g}(q,n) $. The multiplicative group arising by a Dickson pair $(q, n)$ is denoted by $G_{q, n}$. The group $G_{q, n}$ is metacyclic and can be presented as follow
\begin{equation}
\left\langle a, b \mid a^{m}=1, b^{m}=a^{t}, ba= a^{q}b\right\rangle 
\end{equation} which  is the set of the elements $a, b$, 
where \begin{equation}
\begin{cases}
m=\frac{q^{n}-1}{n}\\
t=\frac{m}{q-1}.
\end{cases}
\end{equation}
Now to construct a finite Dickson nearfield, we need the following.
\begin{defn}(\cite{pilz2011near})
	Let $R$ be a nearfield and $ Aut(R, +, \cdot) $ the set of all automorphism of $R$. A map 
	\begin{align*}
	\phi: R^{*}&\longrightarrow  Aut(R, +, \cdot) \\
	a&\longrightarrow \phi_{a}
	\end{align*} is called a coupling map if for all $a,b \in R^{*}$ we have $\phi_{a}\circ\phi_{b}=\phi_{\phi_{a}(b)\cdot a} $.
\end{defn}
\begin{exa}(\cite{pilz2011near})
	Let us consider 
	\begin{align*}
	\phi: R^{*}&\longrightarrow  Aut(R, +, \cdot) \\\quad 
	a&\longrightarrow id_{R}.
	\end{align*}
	The map $\phi$ is a coupling map because for all $a, b \in R^{*}$, we have $\phi_{a}\circ \phi_{b}=id_{R}\circ id_{R}= id_{R}$ and $\phi_{a}(b)= b$. Then 
	\begin{align*}
	\phi_{\phi_{a}(b)}a &=\phi_{ba}\\
	&=id_{R}.\\ 
	\text{Therefore}\quad \phi_{a}\circ \phi_{b}&=\phi_{\phi_{a}(b)}a.
	\end{align*}
\end{exa}
\subsection{Dickson construction}
To define a Dickson nearfield, Dickson used  a technique to "distort" the multiplication of a finite field. 
\begin{defn}(\cite{djagba2020generalized})
	Let $(R, +, \cdot)$ be a nearfield. Let us consider the coupling map $\phi: a\longrightarrow id$ . In this case 
	\begin{equation*}
	a\circ_{\phi} b:=
	\begin{cases}
	\phi_{a}(b)\cdot a=a\cdot id (b)= a\cdot b, \quad \text{if}\quad a\neq 0,\\
	0,\qquad \text{if}\quad a=0
	\end{cases}
	\end{equation*}
\end{defn}
Thus we have the trivial coupling map because the new operation is the same as the usual operation of multiplication. And then we have the following definition.    
\begin{defn}(\cite{pilz2011near})
	If $(R, +, \cdot)$ is a nearfield, then the \emph{$\phi-$derivation} of   $(R, +, \cdot)$ is  $(R, +, \circ_{\phi})$ which means $R^{\phi}= R$ is also a nearfield but not necessarily a Dickson nearfield.
\end{defn}
\begin{exa}(\cite{djagba2020generalized})
	Let $(\mathbb{H}, +, \cdot)$ be skewfield of real quaternions (with the standard basis $\left\lbrace 1, i, j, k \right\rbrace $) and $t\in R$. We define a \emph{new multiplication}  $"\circ"$  on $\mathbb{H}$ by 
	\begin{equation*}
	a\circ b=\begin{cases}
	\mid b\mid^{it}a \mid b\mid^{-it} b \quad \text{if} \quad b\neq 0\\
	0 \quad \text{if} \quad b= 0\\
	\end{cases}
	\end{equation*}
	Then $(\mathbb{H}_{t}:= (\mathbb{H}, +, \circ))$ is a nearfield but not a Dickson nearfield.
	In fact  $\mathbb{H}_{t}=\mathbb{H}^{\phi}$ where
	\begin{align*}
	\phi: \mathbb{H}&\longrightarrow Aut(\mathbb{H}, +, \cdot)\\
	b&\longrightarrow \phi_{b} 
	\end{align*} is a coupling map with automorphism 
	\begin{align*}
	\phi_{b}: & \mathbb{H}\longrightarrow \mathbb{H}\\
	&a\longrightarrow \mid b\mid^{it} a \mid b\mid^{-it}.
	\end{align*}
\end{exa}
\begin{defn}(\cite{boykett2016multiplicative})
	If $(\mathbb{F}, +, \cdot)$ is a field, then the $\phi-$ derivation of $(F, +, \cdot)$ is $(\mathbb{F}, +, \circ_{\phi})$ which means $\mathbb{F}^{\phi}=\mathbb{F}$. It implies that every field is a Dickson nearfield.
\end{defn}
\begin{defn}(\cite{pilz2011near})
	The notation  $ R^{\phi}$. $\left\lbrace \phi_{a}: a\in R^{*}\right\rbrace $ is called the Dickson-group of $\phi$. $ R$
	is said to be a Dickson nearfiled if  $ R$ is the $\phi-$derivation of some field $\mathbb{F}^{\phi}$,  $(\mathbb{F}^{\phi}= R)$.
\end{defn}
We will see that for each Dickson pair $(q, n)$, a Dickson  nearfield contains $q^{n}$ elements.

\begin{thm}
	For all Dickson pairs $(q, n)$,  there exists some associated finite nearfield of order  $q^{n}$  which arise by taking   the finite field $\mathbb{F}_{q^{n}}$ and change the multiplication such that  $\mathbb{F}^{\phi}_{q^{n}} =(\mathbb{F}_{q^{n}}, +, \circ)$ for  some coupling map $\phi$ on $\mathbb{F}_{q^{n}}$,  where $"\circ"$ is the new multiplication (\cite{pilz2011near}).
\end{thm}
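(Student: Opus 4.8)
The plan is to obtain the required nearfield as a $\phi$-derivation of the field $\mathbb{F}_{q^n}$: once a coupling map $\phi$ on $\mathbb{F}_{q^n}$ is produced, the framework set up above guarantees that $(\mathbb{F}_{q^n},+,\circ_\phi)$, with $a\circ_\phi b=\phi_a(b)\cdot a$ for $a\neq 0$ and $0\circ_\phi b=0$, is a nearfield on the same additive group and hence of order $q^n$. So the whole statement reduces to exhibiting one explicit coupling map on $\mathbb{F}_{q^n}$ whose nontrivial automorphisms are powers of the Frobenius.

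First I would fix $F=\mathbb{F}_{q^n}$, the Frobenius $\sigma(x)=x^q$ generating $\mathrm{Gal}(F/\mathbb{F}_q)=\langle\sigma\rangle\cong\mathbb{Z}/n\mathbb{Z}$, and a generator $g$ of the cyclic group $F^{*}$ of order $q^n-1$. From the Dickson-pair axioms the Remark supplies two arithmetic facts: $\gcd(q,n)=1$, so $q$ is a unit modulo $n$, and $n\mid[n]_q$, whence $n\mid q^n-1$, so the subgroup $(F^{*})^{n}$ of $n$-th powers has index exactly $n$ and $F^{*}/(F^{*})^{n}\cong\mathbb{Z}/n\mathbb{Z}$. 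I would then set $\phi_a=\sigma^{e(a)}$, where $e:F^{*}\to\mathbb{Z}/n\mathbb{Z}$ is constant on the cosets of $(F^{*})^{n}$ and is defined by $e(g^{k})=\mu^{-1}(k\bmod n)$, the map $\mu:\mathbb{Z}/n\mathbb{Z}\to\mathbb{Z}/n\mathbb{Z}$ being $\mu(s)\equiv[s]_q\pmod n$ with $[s]_q=1+q+\cdots+q^{s-1}$. This $\mu$ is exactly the solution of the twisted recursion $\mu(s+t)\equiv\mu(s)+q^{s}\mu(t)\pmod n$ forced by the coupling identity.

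The key steps, in order, would be the following. (1) Show $\mu$ descends to $\mathbb{Z}/n\mathbb{Z}$: since $[s+n]_q=[s]_q+q^{s}[n]_q$, this is precisely the congruence $n\mid[n]_q$ from the Remark, so that is where the divisibility hypothesis makes $\phi$ well defined. (2) Show $\mu$ is a bijection of $\mathbb{Z}/n\mathbb{Z}$; because $q$ is a unit mod $n$, injectivity is equivalent to $[d]_q\not\equiv 0\pmod n$ for $1\le d<n$, and this finer fact is exactly what the full Dickson-pair hypotheses, including the parity restriction (iii), are designed to secure. (3) Verify the coupling identity $\phi_a\circ\phi_b=\phi_{\phi_a(b)\cdot a}$; writing $a=g^{k}$ and $b=g^{l}$ one has $\phi_a(b)\cdot a=g^{\,k+lq^{e(a)}}$, so the identity is equivalent to $e(a\circ_\phi b)\equiv e(a)+e(b)\pmod n$, which reduces through the definition of $e$ to the recursion satisfied by $\mu$. (4) Conclude with the $\phi$-derivation framework that $(F,+,\circ_\phi)$ is a nearfield of order $q^n$; since the $\phi_a$ are not all the identity the multiplication is genuinely distorted, so this is the associated finite Dickson nearfield.

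The main obstacle I expect is the conjunction of steps (2) and (3): proving that the coset-wise assignment $a\mapsto\phi_a$ actually satisfies the coupling identity, equivalently that $(F^{*},\circ_\phi)$ is a group. Associativity and the existence of $\circ_\phi$-inverses both rest on $\mu$ being a well-defined bijection, and this is the only place where the full strength of the hypotheses is used: $n\mid[n]_q$ by itself is not enough, as the pair $(3,4)$ shows, where $4\mid[4]_3=40$ holds yet $[2]_3=4\equiv 0\pmod 4$ kills injectivity of $\mu$ — and indeed $(3,4)$ violates condition (iii) and is not a Dickson pair. Once the coupling identity is in hand, the remaining checks (left distributivity, and that the additive group is unchanged and abelian) are immediate from the construction and the general machinery recorded earlier, so I would dispatch them only briefly.
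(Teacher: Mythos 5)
A preliminary remark on the comparison: the paper does not actually prove this theorem --- it states it and defers entirely to \cite{pilz2011near} (``For the proof, see...''), so the only thing to measure your proposal against is the standard construction in that reference, and your proposal is essentially that construction. Its architecture is sound: defining the coupling map $\phi_a=\sigma^{e(a)}$ to be constant on the cosets of $H=(\mathbb{F}_{q^{n}}^{*})^{n}=\langle g^{n}\rangle$, observing that well-definedness of $\mu$ on $\mathbb{Z}/n\mathbb{Z}$ is exactly the divisibility $n\mid[n]_q$, reducing the coupling identity $\phi_a\circ\phi_b=\phi_{\phi_a(b)\cdot a}$ to the identity $[s+t]_q=[s]_q+q^{s}[t]_q$, and then invoking the recorded fact that a $\phi$-derivation of a (near)field is again a nearfield on the same additive group --- your steps (1), (3) and (4) are correct, and the computations you indicate do go through.

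The genuine gap is your step (2), and it is not a minor one: it is the entire number-theoretic content of the theorem. You assert that injectivity of $\mu$, i.e.\ $[d]_q\not\equiv 0\pmod n$ for $1\le d<n$, is what the Dickson-pair hypotheses ``are designed to secure,'' and your example $(3,4)$ correctly shows that $n\mid[n]_q$ alone does not suffice; but you never derive the claim from conditions (ii) and (iii), and as a consequence those conditions are never actually used anywhere in your argument. Note also that without bijectivity of $\mu$ the map $e$, and hence $\phi$ itself, is not even defined, so nothing can be salvaged short of proving this. What is missing is an argument such as the following: write $v_p$ for the $p$-adic valuation. For each prime $p\mid n$, condition (ii) gives $q\equiv 1\pmod p$. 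If $p$ is odd, or if $p=2$ and $q\equiv 1\pmod 4$, lifting the exponent yields $v_p([d]_q)=v_p(q^{d}-1)-v_p(q-1)=v_p(d)$. If $p=2$ and $q\equiv 3\pmod 4$, then $[d]_q$ is a sum of $d$ odd terms, so $[d]_q\equiv d\pmod 2$, while condition (iii) forces $v_2(n)\le 1$; hence $v_2(n)\le v_2([d]_q)$ still implies $v_2(n)\le v_2(d)$. Combining over all $p\mid n$, the hypothesis $n\mid[d]_q$ forces $n\mid d$, which is impossible for $1\le d<n$. Until a derivation of this kind is supplied, the coupling identity --- and with it the theorem --- remains unproved in your proposal.
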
 

For the proof, see (\cite{pilz2011near}).

\begin{thm}\label{thg}
	Let $R$ be a finite Dickson nearfield that arises from the Dickson
	pair $(q, n)$. Then $D(R) = \mathbb{F}_{q}$ (\cite{djagba2020generalized}).
\end{thm}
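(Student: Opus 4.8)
The plan is to prove the two inclusions $\mathbb{F}_q \subseteq D(R)$ and $D(R)\subseteq\mathbb{F}_q$ separately, working throughout with the explicit twisted multiplication coming from the Dickson construction. Recall that on $R=\mathbb{F}_{q^n}$ one has $a\circ b=\phi_a(b)\cdot a$, where each $\phi_a$ is a power of the Frobenius automorphism $\sigma\colon x\mapsto x^{q}$; writing $\phi_a=\sigma^{w(a)}$ for a suitable $w(a)\in\{0,\dots,n-1\}$, the Dickson group $\{\phi_a:a\in R^{*}\}$ is the full cyclic group $\langle\sigma\rangle$ of order $n$, whose fixed field is exactly $\mathbb{F}_q$. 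I will use this identification of $\mathbb{F}_q$ with $\mathrm{Fix}(\sigma)$ as the guiding principle: distributivity of $x$ ought to be equivalent to $x$ being fixed by every $\phi_a$. (When $n=1$ the nearfield is just the field $\mathbb{F}_q$ and the statement is trivial, so I assume $n\ge 2$.)

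The inclusion $\mathbb{F}_q\subseteq D(R)$ is the easy half. If $x\in\mathbb{F}_q$ then $x^{q}=x$, so $\phi_a(x)=\sigma^{w(a)}(x)=x$ for every $a$. Hence for all $y,z\in R$ I can compute $(y+z)\circ x=\phi_{y+z}(x)\,(y+z)=x(y+z)$ and $y\circ x+z\circ x=\phi_y(x)\,y+\phi_z(x)\,z=xy+xz$, and these agree by ordinary distributivity in the field $\mathbb{F}_{q^n}$. Thus every such $x$ is a distributive element, giving $\mathbb{F}_q\subseteq D(R)$.

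For the reverse inclusion I would fix $x\in D(R)$ and study the right-multiplication map $\rho_x\colon y\mapsto y\circ x$, which by the defining property of $D(R)$ is additive on $(R,+)$. The key observation is that on $H:=\{a\in R^{*}:\phi_a=\mathrm{id}\}$ the twisted product collapses to the field product: for $y\in H$ one has $y\circ x=\phi_y(x)\,y=xy$, so $\rho_x$ agrees there with the (plainly additive) map $\mu_x\colon y\mapsto xy$. Moreover $\circ$ restricts to $\cdot$ on $H$, so $H$ is closed under field multiplication and, being of order $(q^{n}-1)/n$, is precisely the unique subgroup of $n$-th powers $(\mathbb{F}_{q^n}^{*})^{n}$. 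If I can show that $H$ additively generates $(\mathbb{F}_{q^n},+)$, then the two additive maps $\rho_x$ and $\mu_x$, agreeing on a generating set, coincide on all of $R$; comparing them yields $\phi_y(x)\,y=xy$, i.e. $\phi_y(x)=x$, for every $y\ne 0$. Since $\phi$ is surjective onto $\langle\sigma\rangle$, some $y$ realises $\phi_y=\sigma$, whence $x^{q}=x$ and $x\in\mathbb{F}_q$.

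The main obstacle, and the one genuinely arithmetic step, is this generation lemma: that $H=(\mathbb{F}_{q^n}^{*})^{n}$ spans $\mathbb{F}_{q^n}$ additively. I would prove it by letting $V$ be the additive subgroup generated by $H$ and noting that, since $H$ is multiplicatively closed and contains $1$, $V$ is a subring of $\mathbb{F}_{q^n}$, hence a finite integral domain, hence a subfield $\mathbb{F}_{q^{e}}$ with $e\mid n$. Then $V^{*}\supseteq H$ forces $q^{e}-1\ge (q^{n}-1)/n$, and if $e<n$ (so $e\le n/2$) this inequality fails as soon as $q\ge 3$. The Dickson pair hypotheses enter exactly here: for $n\ge 2$ some prime divides both $n$ and $q-1$, forcing $q\ge 3$; and Remark (i), $n\mid[n]_q$, guarantees $n\mid q^{n}-1$, so that $H$ really has index $n$ and contains $\mathbb{F}_q^{*}$. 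Hence $e=n$, $V=\mathbb{F}_{q^n}$, and the argument closes. I expect the only places needing care to be the surjectivity of $\phi$ onto $\langle\sigma\rangle$ (i.e. that all $n$ Frobenius powers occur) and the subgroup-order estimate above; the two distributivity computations themselves are routine.
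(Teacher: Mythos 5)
Your proof is correct, and for the hard inclusion $D(R)\subseteq\mathbb{F}_q$ it takes a genuinely different route from the paper's. The easy inclusion is the same in both: elements of $\mathbb{F}_q$ are fixed by every Frobenius power, so the twist disappears and ordinary field distributivity finishes. For the converse, the paper argues in two stages: it first shows $\phi_\alpha=\mathrm{id}$ by a fixed-field argument ($\phi_\alpha$ fixes $\mathbb{F}_p$ and $g^n$, hence the subfield $\mathbb{F}_p(g^n)=\mathbb{F}_{q^n}$), and then tests the distributive law against a decomposition $y+z=g$, implicitly taking $y,z\in H$ so that the right-hand side collapses to $(y+z)\alpha=g\alpha$ while the left-hand side is $g\alpha^{q}$. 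The weak points of that argument --- nowhere is it proved that $g$ is actually a sum of two elements of $H$, and the intermediate identity $g^{n}\alpha=\alpha\phi_\alpha(g^{n})$ tacitly treats $\circ$ as commutative --- are exactly what your key lemma repairs: you only need $g$ to lie in the \emph{additive span} of $H$, and you prove that span is all of $\mathbb{F}_{q^n}$ by observing it is a subring, hence a subfield $\mathbb{F}_{q^e}$ containing $\mathbb{F}_q$, and then ruling out $e\le n/2$ via the counting bound $q^{e}-1\ge(q^{n}-1)/n$, which fails for $q\ge 3$ (and the Dickson-pair conditions force $q\ge 3$ once $n\ge 2$). Combined with the purely formal observation that $\rho_x\colon y\mapsto y\circ x$ is additive for $x\in D(R)$ and agrees with $\mu_x\colon y\mapsto yx$ on $H$, this gives $\phi_y(x)=x$ for all $y\ne 0$, and $y=g$ yields $x^{q}=x$; note you never need to locate the $H$-coset of $x$ itself, so the paper's entire $\phi_\alpha=\mathrm{id}$ stage is bypassed. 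The two details you flagged are indeed harmless: $\phi_g=\sigma$ holds by construction of the Dickson twist, and $|H|=(q^{n}-1)/n$ together with $\mathbb{F}_q^{*}\subseteq H$ follows from $n\mid[n]_q$, as you say. The net effect is that your argument is self-contained and rigorous where the paper's is elliptical, at the cost of the extra (but elementary) subring-plus-counting lemma.
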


\begin{proof}
	Let  $(q, n)$  be a Dickson pair where $q=p^{l}$ for some prime $p$ and positive integers $l, n$. Let us consider  $g$ a generator of $\mathbb{F}_{q^{n}}^{*}$ and $R$ the finite nearfield which is constructed with $H=<g^{n}>$. Let $\mathbb{F}_{q}$ be the unique subfield of order $q$ of $\mathbb{F}_{q^{n}}$. Then $\mathbb{F}_{q}\subseteq D(R)$.

	From a lemma in (\cite{murphy2006course}), we know that $\mathbb{F}_{q}$ is a solution set to the equation $\alpha^{q}-\alpha=0$ in $\mathbb{F}_{q^{n}}$.
	We consider $g$ a generator of  $\mathbb{F}_{q^{n}}^{*}$ and we take  $\alpha\in \mathbb{F}_{q}^{*}$ and we write $\alpha=g^{l}$.  Since $\alpha\in \mathbb{F}_{q}$, we have $\alpha^{q}=\alpha$; which means $\alpha^{q-1}=1$. Therefore $(g^{l})^{q-1}=1$, which means $g^{(q-1)}=1$. 
	
	Thus, $q^{n}-1$ divides $l(q-1)$, i.e $[n]_{q}l$.
	Thus, $\mathbb{F}^{*}_{q}=<g^{[n]_{q}}>$. Since $n$ divides $[n]_{q}$, then $<g^{[n]_{q}}>$ is a subset of $<g^{n}>$. Then  we have $\mathbb{F}^{*}_{q}\subseteq H$.
	
	Furthermore, for $\alpha \in\mathbb{F}^{*}_{q} $, $x\in H=g^{[n]_{q}}H$.
	By Dickson construction, 
	\begin{align*}
	\phi_{\alpha}(\beta)&=\varphi^{n}(\beta)\\
	&=\beta^{q^{n}}\\
	&=\beta.
	\end{align*}
	Hence $ \phi_{\alpha}= id$.
	We  take
	\begin{align}
	(y+z)\circ \phi_{\alpha}(t)&=y\cdot \phi_{\alpha}(t)+z\cdot \phi_{\alpha}(t)\label{eq}\\
	&=yt+zt\quad\text{for all $y, z, t \in R$}\nonumber.
	\end{align}
	Moreover, since $\alpha \in \mathbb{F}_{q}$, then $\alpha^{q}=\alpha$.
	Thus $\varphi^{l}(\alpha)=\alpha$ and
	\begin{align*}
	(y+z)\circ\alpha&=(y+z)\phi_{(y+z)}(\alpha)\\
	&=(y+z)\cdot \varphi^{l}(\alpha)\\
	&=y\varphi^{l}(\alpha)+z\varphi^{l}(\alpha)\quad\text{ from (\ref{eq})}\\
	&=y\alpha+z\alpha\quad\text{because $\varphi^{l}(\alpha)=\alpha$}.
	\end{align*}
	Therefore for all $y, z, t \in R$, $\alpha \in D(R)$. It is proved that $\mathbb{F}_{q}\subseteq D(R).$
	
	Let us how that $D(R)\subseteq \mathbb{F}_{q}$.
	
	Let 	$\alpha\in D(R)$ therefore $(y+z)\circ\alpha=y\circ\alpha+z\circ \alpha$, for all $y, z\in R$.
	Let $(y+z)=g^{n}H$. Then 
	\begin{align*}
	(y+z)\circ \alpha&=(y+z)\phi_{(y+z)}(\alpha)\\
	&=g^{n}\phi_{g^{n}}(\alpha)\\
	&=g^{n}\alpha\quad\text{since $\phi_{g^{n}}=id$}.
	\end{align*}
	Since $(y+z)\alpha=y\circ\alpha+z\circ \alpha$, 
	\begin{align*}
	(y+z)\phi_{(y+z)}(\alpha)&=y\phi_{y+z}(\alpha)+z\phi_{y+z}(\alpha)\\
	&=y\phi_{g^{n}}(\alpha)+z\phi_{g^{n}}(\alpha),
	\end{align*}
	and $g^{n}\alpha= \alpha\phi_{\alpha}(g^{n})$. Hence $\phi_{\alpha}(g^{n})= g^{n}$.
	
	Furthermore, since $\mathbb{F}_{p}$ is fixed by $\psi$ , the Frobenius map, $\phi_{\alpha}$ fixes $\mathbb{F}_{p}$. Therefore $\phi_{\alpha}$ fixes $\mathbb{F}_{p}(g^{n})$, the smallest subfield of $\mathbb{F}_{q^{n}}$ that contains $\mathbb{F}_{p}$ and $g^{n}$. By the lemma 2.8 (\cite{djagba2020generalized}), $\phi_{\alpha}$ fixes $\mathbb{F}_{q^{n}}$ . Thus $\phi_{\alpha}=id$.
	
	Let us take $(y+z)=g\in g^{[n]_{q}}H$, then $\phi_{(y+z)}=\phi_{g}=\varphi=\psi^{l}$. So
	
	\begin{align*}
	(y+z)\circ\alpha&=g\circ x\\
	&=g\phi_{g}(\alpha)\\
	&=g\varphi(\alpha).
	\end{align*}
	
	We have now 
	\begin{align*}
	(y+z)\circ \alpha&=y\circ \alpha+z\circ \alpha\quad (\text{because $\alpha \in D(R)$})\\
	\Leftrightarrow g\varphi(\alpha)&=g\alpha\\
	\Leftrightarrow \varphi(\alpha)&=\alpha\\
	\Leftrightarrow \alpha^{q}&=\alpha.
	\end{align*} Therefore $\alpha \in \mathbb{F}_{q}$.
	We have shown that $D(R)=\mathbb{F}_{q}$ where $R\in DN(q, n)$.
\end{proof}

In this chapter, we defined the concept of a Dickson pair and showed examples of Dickson pairs. We then introduced the notion of a Dickson nearfield, which arises from a twisting of a finite field using a Dickson pair. We presented the construction of a finite Dickson nearfield using a coupling map and defined the new multiplication operation on a nearfield as the composition of the usual multiplication and the automorphism induced by the coupling map. Finally, we gave the presentation of the multiplicative group of a finite Dickson nearfield.

\newpage
\section{The generalized set of distributive elements of a nearfield}
In the first chapter, from the Definition \ref{der}, we have seen that if $R$ is a left nearfield, $D(R)$ is the set af all distributive elements of $R$. In  this chapter, we are going to study the generalized set of distributive elements of a nearfield $D(\alpha, \beta)$. We will see some sufficient conditions on $\alpha$ and $\beta$ for $D(\alpha, \beta)$ to be  a subfield of $\mathbb{F}_{q^{n}}$,  where $\mathbb{F}_{q^{n}}$ is a finite field of order $q^{n}$.

\subsection{New multiplication of Dickson }
Considering  a Dickson pair and let  $R$  be a finite Dickson near-field.
For a given pair $(\alpha, \beta)\in R^{2}$, we consider the set
\begin{equation}\label{ey}
D(\alpha, \beta)=\left\lbrace \lambda \in R: (\alpha+\beta)\circ
\lambda=\alpha\circ \lambda+\beta\circ\lambda \right\rbrace.
\end{equation}

In the equation  (\ref{ey}), '$\circ$' is the  new multiplication of the Dickson nearfield (\cite{djagba2020generalized}).

Note that the set $D(\alpha,\beta)$ is not always a subfield of $\mathbb{F}_{q^n}$   and it is not always  a subnearfield of $R$. There are some conditions on $\alpha$ and $\beta$ that can make $D(\alpha,\beta)$ a subfield of $\mathbb{F}_{q^n}$. Moreover, if $\alpha, \beta,$ and $\alpha+\beta$ belong to different sets, we can create a subfield of $\mathbb{F}_{q^n}$ using $D(\alpha,\beta)$ .
As we know, if $R$ is a left nearfield, then $D(R)$ is the set of all distributive elements  of $R$ and $C(R)$ is the center of $R$. Here $R$ is provided with the operations $'+'$ and $"\circ"$.
From this, we have the definition below.
\begin{defn}
	Let $R$ be a near-ring and $D(R)$ be the distributive elements of $R$. Then the \emph{generalized center of $R$} is defined as 
	\begin{equation}
	GC(R)=\left\lbrace x\in R: x\circ y= y\circ x, \text{for all}\quad y\in D(R)\right\rbrace.
	\end{equation}
\end{defn}
Given  $k\in \left\lbrace 1, \dots, n \right\rbrace $, an $H$-cosets is a coset of the form $g^{[k]_{q}}H$.
For any pair  $(\alpha,\beta)\in R^{2}$, we are going to see some conditions on  $\alpha, \beta, \alpha+\beta$ when  they  belong to the same $H$-cosets or when they are in  the different $H$-cosets.
This leads us to investigate the results below.
\subsection{Some results on $D(\alpha, \beta)$}
We just give some lemma and theorem in which we find some conditions on $\alpha$ and $\beta$ so that we can have a decision on $D(\alpha, \beta)$ over a finite field $\mathbb{F}_{q^{n}}$ for a given Dickson pair $(q, n)$. The first result belongs exactly on the definition of $D(\alpha, \beta)$ with a new multiplication. So we have the lemma bellow.
\begin{lem}\label{lemm31}
	Let $R\in DN(q, n)$ where $(q,n)$ is a Dickson pair. Let $(\alpha,\beta)\in R^{2}$. If $\alpha, \beta, \alpha+\beta$ belong to the same $H$-cosets, then  $(\alpha+\beta)\circ \lambda=\alpha\circ \lambda +\beta\circ\lambda$ for all $\lambda \in R$ (\cite{djagba2020generalized}). 
\end{lem}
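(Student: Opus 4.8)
The plan is to reduce the twisted distributive identity to ordinary distributivity in the underlying field $\mathbb{F}_{q^n}$, exploiting the fact that the coupling map is constant on each $H$-coset. Recall from the Dickson construction that for $a\neq 0$ the new product is $a\circ\lambda=\phi_a(\lambda)\cdot a=a\cdot\phi_a(\lambda)$, the two orders agreeing because ``$\cdot$'' is the \emph{commutative} multiplication of $\mathbb{F}_{q^n}$, and that whenever $a$ lies in the coset $g^{[k]_q}H$ the automorphism $\phi_a$ equals $\varphi^k$, a fixed power of the Frobenius map that depends only on the coset index $k$.

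First I would invoke the hypothesis that $\alpha$, $\beta$, and $\alpha+\beta$ all lie in the same $H$-coset; this forces their associated automorphisms to coincide, so I may write $\phi:=\phi_\alpha=\phi_\beta=\phi_{\alpha+\beta}$. Expanding the two sides of the desired identity then gives
\[
(\alpha+\beta)\circ\lambda=(\alpha+\beta)\cdot\phi(\lambda)
\]
on the left, and
\[
\alpha\circ\lambda+\beta\circ\lambda=\alpha\cdot\phi(\lambda)+\beta\cdot\phi(\lambda)
\]
on the right. Since $(\mathbb{F}_{q^n},+,\cdot)$ is a field, ordinary right distributivity yields $\alpha\cdot\phi(\lambda)+\beta\cdot\phi(\lambda)=(\alpha+\beta)\cdot\phi(\lambda)$, so the two expressions coincide for every $\lambda\in R$. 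This is exactly the claimed identity.

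I do not anticipate a genuine obstacle, because the same-coset hypothesis is designed precisely to make the three coupling automorphisms agree, after which the twisting becomes invisible and the field's distributivity does all the work. The only care needed is bookkeeping: one must recall from the construction that $\phi_a$ truly depends only on the coset of $a$, and note that membership in an $H$-coset $g^{[k]_q}H\subseteq\mathbb{F}_{q^n}^{*}$ already guarantees that $\alpha$, $\beta$, and $\alpha+\beta$ are all nonzero, so the formula $a\circ\lambda=a\cdot\phi_a(\lambda)$ applies to each of them without a separate treatment of the zero element.
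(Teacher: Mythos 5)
Your proposal is correct and follows essentially the same route as the paper's proof: both reduce the claim to ordinary distributivity in $\mathbb{F}_{q^n}$ by observing that $\alpha$, $\beta$, and $\alpha+\beta$ lying in a single $H$-coset $g^{[k]_q}H$ forces the same twist $\lambda\mapsto\lambda^{q^{k}}$ (your common automorphism $\phi=\varphi^{k}$) in all three products. If anything, your bookkeeping is cleaner than the paper's, which writes the intermediate field products with the symbol $\circ$ and cites the splitting of $X^{q}-X$ without actually needing it.
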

\begin{proof}
	We consider $g$  such that 
	\begin{equation*}
	\begin{cases}
	\mathbb{F}_{q^{n}}=<g>,\\
	H=<g^{n}>.
	\end{cases}
	\end{equation*} 
	The set of all $H$-cosets is constructed as 
	\begin{equation}
	\mathbb{F}^{*}_{q^{n}}/H=\left\lbrace  H, g^{[1]_{q}}H, \dots,  g^{[n]_{q}}H\right\rbrace 
	\end{equation}
	Now we assume that $\alpha, \beta, \alpha+\beta\in  g^{[k]_{q}}H$ for $1\leq k\leq n$.
	We know that any finite field $\mathbb{F}_{q}$ of order $q$ is a set of solutions of the equation $X^{q}-X=0$ (\cite{lidl1997finite} p52). Then, 
	\begin{align*}
	(\alpha+\beta)\circ\lambda&= (\alpha+\beta)\circ \lambda^{q^{k}}\\
	&=\alpha\circ \lambda^{q^{k}}+\beta\circ \lambda^{q^{k}}\\
	&=\alpha \circ \lambda+ \beta\circ \lambda, \quad\text{for all}\quad \lambda\in R.
	\end{align*}
\end{proof}
\begin{lem}\label{lem2}
	Let $(q,n)=(p^{l}, 2)$ where $p$ is prime and $R\in DN(q, 2)$. Let $(\alpha, \beta)\in R^{2}$ and we assume that $\alpha, \beta,  (\alpha+ \beta)$ do not belong to the same $H$-cosets. We have that  $(\alpha+\beta)\circ
	\lambda=\alpha\circ \lambda+\beta\circ\lambda$ if and only if $\lambda\in D(R)$ (\cite{djagba2020generalized}).
\end{lem}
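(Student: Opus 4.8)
The plan is to turn the biconditional into a single computation in $\mathbb{F}_{q^{2}}$, using Theorem \ref{thg} to replace $D(R)$ by $\mathbb{F}_{q}$. The first step is to record the shape of the Dickson product when $n=2$. Here $\mathbb{F}_{q^{2}}^{*}=\langle g\rangle$ has exactly two $H$-cosets, $H$ and $gH$, and (as already computed in the proof of Theorem \ref{thg}) the coupling automorphism satisfies $\phi_{x}=\mathrm{id}$ for $x\in H$, since $\varphi^{2}=\mathrm{id}$ on $\mathbb{F}_{q^{2}}$, and $\phi_{x}=\varphi$ for $x\in gH$, where $\varphi$ is the map $u\mapsto u^{q}$. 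Consequently $x\circ\lambda=x\cdot\phi_{x}(\lambda)$ equals $x\lambda$ or $x\lambda^{q}$ according to the coset of $x$, while the addition is the ordinary addition of $\mathbb{F}_{q^{2}}$.

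The backward implication is immediate and does not use the coset hypothesis: if $\lambda\in D(R)$ then $\lambda$ is a distributive element, so $(\alpha+\beta)\circ\lambda=\alpha\circ\lambda+\beta\circ\lambda$ holds for every pair, in particular for the given one; equivalently, $\lambda\in\mathbb{F}_{q}$ forces $\lambda^{q}=\lambda$, hence $\phi_{x}(\lambda)=\lambda$ for all $x$ and the identity collapses to the distributive law of the field.

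For the forward implication I would avoid brute case work and argue uniformly. Set $\delta_{x}=0$ if $x\in H$ and $\delta_{x}=1$ if $x\in gH$, so that $\phi_{x}(\lambda)=\lambda+\delta_{x}(\lambda^{q}-\lambda)$ in every case. Substituting this into $(\alpha+\beta)\circ\lambda=\alpha\circ\lambda+\beta\circ\lambda$ and cancelling the common summand $(\alpha+\beta)\lambda$ reduces the hypothesis to
\[
\bigl(\alpha\,\delta_{\alpha}+\beta\,\delta_{\beta}-(\alpha+\beta)\,\delta_{\alpha+\beta}\bigr)\,(\lambda^{q}-\lambda)=0 .
\]
Thus either $\lambda^{q}=\lambda$, i.e.\ $\lambda\in\mathbb{F}_{q}=D(R)$, or the scalar $C:=\alpha\delta_{\alpha}+\beta\delta_{\beta}-(\alpha+\beta)\delta_{\alpha+\beta}$ is zero.

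The heart of the proof --- and the only point where the hypothesis is used --- is to check that $C\neq0$ exactly when $\alpha,\beta,\alpha+\beta$ do not all lie in the same coset. Running through the triples $(\delta_{\alpha},\delta_{\beta},\delta_{\alpha+\beta})$, one finds $C=0$ precisely in the two excluded configurations $(0,0,0)$ and $(1,1,1)$, whereas in each of the six mixed configurations $C$ equals one of $\pm\alpha,\pm\beta,\pm(\alpha+\beta)$, all nonzero because $\alpha,\beta,\alpha+\beta\in\mathbb{F}_{q^{2}}^{*}$. Hence under the stated hypothesis $C\neq0$, the displayed relation forces $\lambda^{q}=\lambda$, so $\lambda\in D(R)$, completing the equivalence. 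The main obstacle is essentially this bookkeeping: one must confirm that the vanishing locus of $C$ is exactly the ``same coset'' case and that the nonvanishing of $\alpha,\beta,\alpha+\beta$ legitimises the cancellation; note also that the excluded cases are consistent with Lemma \ref{lemm31}, where the identity holds for every $\lambda$ and therefore cannot force $\lambda\in\mathbb{F}_{q}$.
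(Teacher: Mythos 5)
Your proof is correct, and at its core it is the same computation the paper performs: substitute the twisted product $x\circ\lambda=x\lambda$ or $x\lambda^{q}$ according to the $H$-coset of $x$, cancel the part that distributes in the field $\mathbb{F}_{q^{2}}$, and conclude $\lambda^{q}-\lambda=0$ from the nonvanishing of a scalar coefficient. The difference is completeness. The paper's proof treats only the single configuration $\alpha+\beta\in H$, $\alpha,\beta\in gH$, and leaves both the remaining mixed configurations and the (trivial) converse implicit; your bookkeeping $\phi_{x}(\lambda)=\lambda+\delta_{x}(\lambda^{q}-\lambda)$ handles all six mixed cases at once, and your verification that $C=\alpha\delta_{\alpha}+\beta\delta_{\beta}-(\alpha+\beta)\delta_{\alpha+\beta}$ vanishes precisely in the two same-coset configurations $(0,0,0)$ and $(1,1,1)$ is exactly the point the paper glosses over: it shows the hypothesis is not just sufficient but necessary for the reduction, and explains why the excluded cases fall under Lemma \ref{lemm31} instead. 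Your explicit appeal to Theorem \ref{thg} to identify $D(R)$ with $\mathbb{F}_{q}$, and the observation that $\alpha,\beta,\alpha+\beta$ lying in $H$-cosets forces them to be nonzero (which legitimises dividing by $C$), are both needed and only implicit in the paper. In short: same route, but your write-up is the complete version of the argument.
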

\begin{proof}
	Considering  that  $\alpha, \beta,  (\alpha+ \beta)$ are not all square or not all non square ( we  suppose that  $\alpha, \beta,  (\alpha+ \beta)$ belong  to different $H$-cosets). 
	
	Now we consider the case where  $\alpha+\beta\in H$ and $\alpha, \beta \in gH$. If  $(\alpha+\beta)\circ\lambda= \alpha\circ\lambda+\beta\circ\lambda$ then $(\alpha+\beta)\circ\lambda= \alpha\circ \lambda^{q}+\beta\circ \lambda^{q}$.
	Thus $\lambda^{q}-\lambda=\lambda^{p^{l}}-\lambda=0$ and hence every $\lambda\in \mathbb{F}_{q}$ is a solution of this equation. 
\end{proof}

Does the lemma \ref{lem2} if   $n$ is greater than $2$? In order to this question, we consider the following example. 
\begin{exa}
	We consider  $R\in DN(q, 2)$ and the pair $(\alpha, \beta)\in R^{2}$ where $\alpha, \beta, \alpha+\beta$ belong to different $H$-cosets. Then by the lemma  (\ref{lem2}), $\lambda\in D(R)$. The equality $(\alpha+\beta)\circ \lambda=\alpha\circ \lambda+\beta\circ\lambda$ will always lead to the equation $\lambda^{q}-\lambda=0$ and all solution will be in $\mathbb{F}_{q}$.
	
	The lemma \ref{lem2} can fail for $n>2$. To see it, let us consider $ (q, n)=(5, 4) $. For instance if $R=DN_{g}(5, 4)=(\mathbb{F}_{5^{4}}, +, \cdot)$, where 
	\begin{equation}
	\mathbb{F}_{5^{4}}=\left\lbrace 0, 1, 2, 3, 4, x^{2}+1, x^{4}+x^{2}, 3+x^{2}+2,\dots \right\rbrace 
	\end{equation} is the finite field of order $5^{4}$. Here we take an irreductible plynomial $x^{4}+2$ of degree $4$ over $\mathbb{F}_{5}$, $x$ is the root of $x^{4}+2$.
	
	Let $g$ be such that 
	\begin{equation}
	\begin{cases}
	\mathbb{F}_{5^{4}}^{*}=<g>\\
	H=<g^{4}>
	\end{cases}
	\end{equation}
	The quotient group is represented by 
	\begin{align*}
	\mathbb{F}_{5^{4}}^{*}/H&=\left\lbrace gH, g^{6}H, g^{31}H, g^{156}H \right\rbrace \\
	&=\left\lbrace H, gH, g^{2}H, g^{3}H \right\rbrace. \\
	\end{align*}
	Let $\alpha, \beta \in \mathbb{F}_{5^{4}}$, then
	\begin{equation}
	\alpha\circ\beta=\begin{cases}
	\alpha\beta,\qquad \text{if} \quad \alpha\in H, \\
	\alpha\beta^{5}, \qquad \text{if} \quad \alpha\in gH,\\
	\alpha\beta^{25}, \qquad \text{if} \quad \alpha\in g^{2}H,\\
	\alpha\beta^{125}, \qquad \text{if} \quad \alpha\in g^{3}H.\\
	\end{cases}
	\end{equation}
	Let  $g=x+2$,  and consider $\alpha=3, \beta=x^{2}+2$. Then $\alpha+\beta \in g^{2}H$. In fact $\lambda=x^{2}+1\in g^{2}H$ distributes over the pair $(\alpha,\beta)$. 
	We can simply see this as  we have
	\begin{align*}
	(\alpha+\beta)\circ \lambda&=(3+x^{2}+2)\circ (x^{2}+1)\\
	&=(x^{2}+0)\circ(x^{2}+1)\\
	&=x^{2}\circ(x^{2}+1)\\
	&=x^{4}+x^{2}.
	\end{align*}
	Note that $\lambda\notin D(R)=\mathbb{F}_{5}$,  but it distributes over the pair $(\alpha, \beta)$.
\end{exa}
If $(\alpha, \beta, \lambda)\in R^{3}$, then $(\alpha+\beta)\circ\lambda\neq\alpha\circ\lambda+\beta\circ\lambda$.
\begin{thm}\label{thre}
	Let $(q, n)$ be a Dickson pair with $q=p^{l}$ for some prime $p$ and positive  integers $l, n$ such that  $n >2$. Let $g$ be a generator of $\mathbb{F}^{*}_{q^{n}}$ and $R$ the finite nearfield constructed with $H=<g^{n}>$. Let $\alpha, \beta\in R^{*}$. If at least two of $\alpha, \beta, \alpha+\beta$  are in the same $H-$coset, then $D(\alpha, \beta)$ is a subfield of $\mathbb{F}_{q^{n}}$ of order $p^{h}$ for some $h$ dividing $ln$ (\cite{djagba2020generalized}).
\end{thm}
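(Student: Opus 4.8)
The plan is to make the new multiplication explicit and then reduce the defining condition of $D(\alpha,\beta)$ to a single Frobenius equation. By the Dickson construction the coupling map attaches to the $H$-coset of each nonzero $\gamma\in R=\mathbb{F}_{q^{n}}$ a power of the $q$-Frobenius $\varphi\colon x\mapsto x^{q}$, say $\phi_{\gamma}=\varphi^{\,i(\gamma)}$ with $i(\gamma)\in\{0,1,\dots,n-1\}$ depending only on the coset, so that $\gamma\circ\lambda=\gamma\,\varphi^{\,i(\gamma)}(\lambda)=\gamma\,\lambda^{q^{\,i(\gamma)}}$. I would first record the indices $a=i(\alpha)$, $b=i(\beta)$, $c=i(\alpha+\beta)$ (here $\alpha,\beta\in R^{*}$ by hypothesis, and I postpone the degenerate case $\alpha+\beta=0$), so that the membership condition $\lambda\in D(\alpha,\beta)$ reads
\[
(\alpha+\beta)\,\lambda^{q^{c}}=\alpha\,\lambda^{q^{a}}+\beta\,\lambda^{q^{b}}.
\]

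Next I would split into the three ways that two of the indices can coincide. If $a=b$, the right-hand side collapses to $(\alpha+\beta)\lambda^{q^{a}}$, and cancelling $\alpha+\beta\neq0$ gives $\lambda^{q^{c}}=\lambda^{q^{a}}$. If $a=c$, subtracting $\alpha\lambda^{q^{a}}$ from both sides leaves $\beta\lambda^{q^{a}}=\beta\lambda^{q^{b}}$, and cancelling $\beta\neq0$ gives $\lambda^{q^{a}}=\lambda^{q^{b}}$; the case $b=c$ is identical after interchanging $\alpha$ and $\beta$. In every case the condition becomes $\varphi^{s}(\lambda)=\varphi^{t}(\lambda)$ for the two distinct indices $s,t$ among $a,b,c$, and since $\varphi$ is a field automorphism, hence injective, I can strip off the common power to obtain the clean equation $\varphi^{\,d}(\lambda)=\lambda$, i.e. $\lambda^{q^{d}}=\lambda$, where $d$ is the difference of the two distinct indices taken mod $n$.

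Finally I would identify the solution set. By the standard description of the fixed field of a power of the Frobenius, $\{\lambda\in\mathbb{F}_{q^{n}}:\lambda^{q^{d}}=\lambda\}=\mathbb{F}_{q^{\gcd(d,n)}}$, which is a subfield of $\mathbb{F}_{q^{n}}=\mathbb{F}_{p^{ln}}$ of order $q^{\gcd(d,n)}=p^{\,l\gcd(d,n)}$; putting $h=l\gcd(d,n)$ gives $h\mid ln$ as required, and this set is closed under the field operations $+,\cdot$ of $\mathbb{F}_{q^{n}}$, so it is genuinely a subfield. This settles the theorem apart from the degenerate case $\alpha+\beta=0$: there the hypothesis can only be met through $a=b$ (since $\alpha+\beta$ lies in no coset), and the defining equation becomes $0=\alpha\lambda^{q^{a}}+\beta\lambda^{q^{a}}=(\alpha+\beta)\lambda^{q^{a}}=0$, which holds for every $\lambda$, so $D(\alpha,\beta)=\mathbb{F}_{q^{n}}$, the improper subfield of order $p^{ln}$.

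I expect the main obstacle to be bookkeeping rather than any deep difficulty. The two points demanding care are, first, that the index $i(\gamma)$ is well defined, i.e. it genuinely depends only on the $H$-coset of $\gamma$ so that two elements in the same coset carry the \emph{same} Frobenius power $\varphi^{\,i(\gamma)}$ (this well-definedness is all I need here; I do not even require the coupling map to be injective on cosets); and second, keeping track of exactly which nonzero element may be cancelled in each of the three coincidence cases, namely $\alpha+\beta$ when $a=b$, and $\beta$ or $\alpha$ when $a=c$ or $b=c$, while handling $\alpha+\beta=0$ separately as above.
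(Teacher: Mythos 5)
Your proof is correct, and its core strategy is the same as the paper's: write the Dickson multiplication explicitly as a coset-dependent Frobenius twist $\gamma\circ\lambda=\gamma\lambda^{q^{i(\gamma)}}$, reduce membership in $D(\alpha,\beta)$ to an equation $\lambda^{q^{s}}=\lambda^{q^{t}}$, hence to $\lambda^{q^{d}}=\lambda$, and identify the solution set as the subfield of order $p^{l\gcd(d,n)}$, which divides into $p^{ln}$ as required. Two differences are worth recording, both in your favour on completeness. First, your case analysis covers configurations the paper skips: after disposing of the ``all three in one coset'' case via Lemma \ref{lemm31}, the paper's proof of Theorem \ref{thre} treats only the configuration $\alpha,\beta\in g^{[s]_{q}}H$ with $\alpha+\beta\in g^{[t]_{q}}H$, saying nothing about the configurations where $\alpha+\beta$ shares its coset with exactly one of $\alpha,\beta$; you handle these by cancelling $\beta$ (resp.\ $\alpha$) instead of $\alpha+\beta$, arriving at the same Frobenius equation, and you also treat the degenerate case $\alpha+\beta=0$, which the paper never mentions. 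Second, where you invoke the standard fact that the solutions of $\lambda^{q^{d}}=\lambda$ in $\mathbb{F}_{q^{n}}$ form $\mathbb{F}_{q^{\gcd(d,n)}}$, the paper reproves it from scratch: it computes $\gcd(p^{m}-1,p^{k}-1)=p^{\gamma}-1$ via Horner division and Bezout, then counts the solutions explicitly as $\{0\}\cup\bigl\{g^{b(p^{m}-1)/(p^{\gamma}-1)}:0\leq b<p^{\gamma}-1\bigr\}$. Your route through injectivity of $\varphi$ (rather than dividing by $\lambda^{q^{s}}$) also sidesteps the paper's separate, and somewhat garbled, handling of $\lambda=0$. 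In short, your proposal buys a complete case analysis and a shorter endgame at the cost of citing a standard finite-field fact; the paper's version is more self-contained in that one computation but less complete in its case coverage.
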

\begin{proof}
	From a  new multiplication in   (\ref{ey}), the set $D(\alpha, \beta)$ is defined as follow: 
	\begin{equation}
	D(\alpha, \beta)=\left\lbrace \lambda \in R: (\alpha+\beta)\circ
	\lambda=\alpha\circ \lambda+\beta\circ\lambda \right\rbrace.
	\end{equation}. Let consider $g^{[k]_{q}}H$ a $H$-cosets in which belong $\alpha, \beta, \alpha+\beta$. Then by lemma \ref{lemm31} with a new multiplication we have $(\alpha+\beta)\circ\lambda=\alpha\circ\lambda+\beta\circ\lambda$ for all $\lambda\quad R$. 
	From theorem \ref{thg} $D(\alpha, \beta)$ coincides with with $\mathbb{F}_{q^{n}}$.

	Now we assume that exactly two of $\alpha, \beta$ and $ \alpha+\beta$ are in the same $H-$coset.  We know that $[k]_{q}$ is a positive integer  arising from a Dickson pair $(q, n)$ where $k\in \left\lbrace  1,\dots, n\right\rbrace$.   
	Then let us consider two positive integers $[t]_{q}$ and $[s]_{q}$ where $g^{[t]_{q}}$ and $g^{[s]_{q}}$ are two different $H$-coset. Such that $s\neq t$.
	Let $\alpha, \beta$ be in $g^{[s]_{q}}H$ and $\alpha+\beta$ in $g^{[t]_{q}}H$.
	
	Then we have
	\begin{align}
	(\alpha+\beta)\circ \lambda&=(\alpha+\beta)\lambda^{q^{t}}\nonumber\\
	&=\alpha\lambda^{q^{t}}+\beta\lambda^{q^{t}} \label{eq26}\\
	\text{Also}\quad (\alpha+\beta)\circ \lambda
	&=(\alpha+\beta)\lambda^{q^{s}}\nonumber\\
	&=\alpha\lambda^{q^{s}}+\beta\lambda^{q^{s}}\label{rt27}
	\end{align} 
	By Substituting the equation (\ref{rt27}) from the equation (\ref{eq26}) we get
	\begin{align*}
	\alpha\lambda^{q^{t}}+\beta\lambda^{q^{t}}-\alpha\lambda^{q^{s}}-\beta\lambda^{q^{s}}&=0\\
	\Leftrightarrow (\alpha+\beta)\lambda^{q^{t}}-(\alpha+\beta)\lambda^{q^{s}}&=0\\
	\Rightarrow  (\alpha+\beta)(\lambda^{q^{t}}-\lambda^{q^{s}})&=0
	\end{align*}
	Since $(\alpha+\beta)\neq 0$ 
	\begin{equation}\label{rts}
	\lambda^{q^{t}}-\lambda^{q^{s}}=0 
	\end{equation}
	and then $\lambda\neq 0$ is solution of the equation (\ref{rts}).
	Now  let consider the case where  $\lambda\neq 0$. It implies that 
	$\lambda^{q^{t}}\neq 0$ and $\lambda^{q^{s}}\neq 0 $.
	Then we have
	\begin{align*}
	\lambda^{q^{t}}\lambda^{q^{s}}-1&=0\\ 
	\Rightarrow \lambda^{q^{t}-q^{s}}-1&=0\\
	\Rightarrow \lambda^{q^{t}-q^{s}}&=1.
	\end{align*}
	We know that for a commutative ring $A$ which has a prime characteristic $p$ and for all $a, b \in A$, the equality $(a\pm b)^{p}=a^{p}\pm b^{p}$ holds. It follows that
	\begin{align*}
	(\lambda^{q^{t}-q^{s}}-1)^{q}&=0\\
	\Rightarrow(\lambda^{q^{t}-q^{s}})^{q}-1&=0\\
	\Rightarrow \lambda^{q^{t+1}-q^{s+1}}-1&=0.
	\end{align*}
	
	We continue the  procedure  up to $\varphi$ (raising to the power $q^{\varphi}$) such that $n=s+\varphi \Rightarrow \varphi=n-s$ 
	Then we have $q^{\varphi}= q^{n-s}$ and 
	\begin{align*}
	(\lambda^{q^{t}-q^{s}}-1)^{q^{n}}&=(\lambda^{q^{t}-q^{s}})^{q^{n}}-1\\
	&=\lambda^{q^{t}q^{n}-q^{s}q^{n}}-1\\
	&=\lambda^{q^{t+n}-q^{s+n}}-1\\
	\end{align*}
	We want to go up to $\varphi$. Then, the order become $q^{\varphi}$ and we get
	\begin{align*}
	(\lambda^{q^{t}-q^{s}}-1)^{q^{\varphi}}&=(\lambda^{q^{t}-q^{s}})^{q^{\varphi}}-1\\
	&=(\lambda^{q^{t}-q^{s}})^{q^{n-s}}-1\\
	&=\lambda^{q^{t}q^{n-s}-q^{s}q^{n-s}}-1\\
	&=\lambda^{q^{t+n-s}-q^{s+n-s}}-1\\
	&=\lambda^{q^{t+n-s}-q^{n}}-1\\
	&=\lambda^{q^{t+\varphi}-q^{s+\varphi}}-1\\
	\end{align*}
	Let $r= t+\varphi$. Then,
	\begin{align}
	\lambda^{q^{r}-q^{n}}-1&=0\nonumber\\
	\Leftrightarrow \lambda^{q^{r}-q^{n}}&=1\nonumber\\
	\Leftrightarrow \frac{\lambda^{q^{r}}}{\lambda^{q^{n}}}&=1\nonumber\\
	\text{Since $\lambda^{q^{n}}$}&=\lambda,\nonumber\\
	\Rightarrow \frac{\lambda^{q^{r}}}{\lambda}&=1\nonumber\\
	\Rightarrow\lambda^{q^{r}}&=\lambda\nonumber\\
	\Rightarrow\lambda^{q^{r}}-\lambda&=0.\label{pjy}
	\end{align}
	
	We know that $q=p^{l}$. Then, the equation (\ref{pjy}) becomes 
	\begin{align*}
	\lambda^{(pl)^{r}}-\lambda&=0\\
	\Rightarrow  \lambda^{p^{lr}}-\lambda&=0\\
	\Rightarrow \lambda^{p^{k}}-\lambda&=0\qquad (k=l.r) \qquad\text{ and}   \qquad m=l.n.\\
	\end{align*}
	Let us denote the equation (\ref{pjy}) by $\Omega(\lambda)$. So we have
	\begin{equation}\label{rtyd}
	\Omega(\lambda)=\left\lbrace \lambda \in \mathbb{F}_{p^{m}}:\lambda^{p^{k}}-\lambda=0\right\rbrace. 
	\end{equation}
	From the expression (\ref{rtyd}), we see that we have two finite fields $\mathbb{F}_{p^{k}}$ and $\mathbb{F}_{p^{m}}$ where every element of 
	$\mathbb{F}_{p^{k}}$ is a solution of (\ref{pjy}) for all $\lambda\in \mathbb{F}_{p^{m}}$.
	We know that $\mathbb{F}_{p^{k}}$ is a subfield of $\mathbb{F}_{p^{m}}$ if $k$ divides $m$. Now we have two cases:
	
	\begin{itemize}
		\item[$\bullet$] \textbf{Case 1}: \textbf{$k$ divides $m$}.
		If $k$ divides $m$ then automatically $\mathbb{F}_{p^{k}}\subset \mathbb{F}_{p^{m}}$ which means that $\mathbb{F}_{p^{m}}$ is an algebraic extension of $\mathbb{F}_{p^{k}}$. And then all $\lambda\in \mathbb{F}_{p^{m}}$ are solution the equation (\ref{pjy}). We conclude that $D(\alpha, \beta)$ coincides with $\mathbb{F}_{p^{k}}$ because $\mathbb{F}_{p^{k}}$ is a subfield of $\mathbb{F}_{p^{m}}$.
		\item[$\bullet$] \textbf{Case 2}: \textbf{$k$ does not  divide $m$}.
		We consider  $f$  as  a Frobenius  automorphism on $\mathbb{F}_{p^{m}}$ such that the fixed field of $f^{n}$ is $\mathbb{F}_{p^{k}}$. Then if $f$ is a Frobenius automorphism, we have $f^{n}(\lambda)=\lambda$ for all $\lambda \in \mathbb{F}_{p^{m}}$.
		
		Now we let $\lambda \in \mathbb{F}_{p^{m}}^{*}$ be a solution of the equation (\ref{pjy}). As $\mathbb{F}_{p^{m}}^{*}$ is generated by $g$, then $g^{a}=\lambda$ for a in $[0, p^{m}-1[$. This is because 
		\begin{equation}
		\begin{cases}
		\mathbb{F}_{p^{m}}=\left\lbrace 0, 1, \dots, p^{m}-1 \right\rbrace \\
		\mathbb{F}_{p^{m}}^{*}=\left\lbrace 1, \dots, p^{m}-1 \right\rbrace.
		\end{cases}
		\end{equation}
		We know that $\lambda^{p^{k}}-\lambda=0$ for all $\lambda \in \mathbb{F}_{p^{m}}$ and since $\lambda= g^{a}$, we have:
		\begin{align*}
		g^{a(p^{k})}-g^{a}=0.
		\end{align*}
		The generator $g$ is diffent from zero, then $g^{a}\neq 0$ and it follow that
		\begin{align*}
		g^{a(p^{k})}-g^{a}&=0\\
		\Rightarrow  g^{a(p^{k})-a}-1&=0\\
		\Rightarrow g^{a(p^{k}-1)}-1&=0\\
		\Rightarrow g^{a(p^{k}-1)}&=1.
		\end{align*}
		So $p^{m}-1$ divides $a(p^{k}-1)$ or $a(p^{k}-1)$ is a multiple of $p^{m}-1$. To say that $p^{m}-1$ divides $a(p^{k}-1)$ means that there exists $t$  such that $a(p^{k}-1)= t(p^{m}-1)$. We set that $\gcd(m, k)=\gamma$ and this means $\gamma $ divides $m$ and $k$. Then there exist $\theta, \theta^{'}\in \mathbb{N}$ such that 
		\begin{equation*}
		\begin{cases}
		m=\gamma \theta\\
		k=\gamma \theta^{'}.
		\end{cases}
		\end{equation*}
		This implies that 
		\begin{align*}
		p^{m}-1&=p^{\gamma \theta}-1\\
		&=(p^{\gamma})^{\theta}-1.
		\end{align*}
		Since $\gcd(m, k)=\gamma$, we have $\gcd(p^{m}-1, p^{k}-1)=p^{\gamma}-1$.
		So we divide $(p^{\gamma})^{\theta}-1$ by $p^{\gamma}-1$ using Horner Method.
		Let \begin{equation}\label{xt}
		p^{\gamma}=x
		\end{equation}. So we divide $x^{\theta}-1$ by $p^{\gamma}-1$. 
		
		\begin{center}
			\begin{tabular}{c|cccc|c}
				&$1$ & $0$ & $ \dots $ & 0& $-1$ \\

				$1$& &$1$ & $\cdots$ & 1&$1$ \\
				\hline
				& $1$& $1$ & $\cdots$ & $1$&$0$ \\
			\end{tabular}
		\end{center}
		Then we have
		\begin{align}
		\frac{x^{\theta}-1}{x-1}&=x^{\theta-1}+x^{\theta-2}+\dots+x+1\nonumber\\
		\Rightarrow x^{\theta}-1&= (x^{\theta-1}+x^{\theta-2}+\dots+x+1)(x-1)\label{p}
		\end{align}
		Replacing the equation  (\ref{xt}) into the equation (\ref{p}), we get
		\begin{align*}
		\frac{p^{\gamma(\theta)}-1}{p^{\gamma}-1}&=p^{\gamma(\theta-1)}+p^{\gamma(\theta-2)}+\dots+p^{\gamma}+1\\
		\Rightarrow p^{\gamma\theta}-1&= (p^{\gamma(\theta-1)}+p^{\gamma(\theta-2)}+\dots+p^{\gamma}+1)(p^{\gamma}-1)\\
		\Rightarrow p^{m}-1&=(p^{\gamma(\theta-1)}+p^{\gamma(\theta-2)}+\dots+p^{\gamma}+1)(p^{\gamma}-1)
		\end{align*}
		Using the same Method, we get
		
		\begin{align*}
		\frac{p^{\gamma(\theta^{'})}-1}{p^{\gamma}-1}&=p^{\gamma(\theta^{'}-1)}+p^{\gamma(\theta^{'}-2)}+\dots+p^{\gamma}+1\\
		\Rightarrow p^{\gamma\theta^{'}}-1&= (p^{\gamma(\theta^{'}-1)}+p^{\gamma(\theta^{'}-2)}+\dots+p^{\gamma}+1)(p^{\gamma}-1)\\
		\Rightarrow p^{k}-1&=(p^{\gamma(\theta^{'}-1)}+p^{\gamma(\theta^{'}-2)}+\dots+p^{\gamma}+1)(p^{\gamma}-1).
		\end{align*}
		We see exactly that $\gcd(p^{m}-1, p^{k}-1)= p^{\gamma}-1$.
		
		By Bezout's theorem gcd, for two non-zero integers $p^{m}-1$ and $p^{k}-1$, let $p^{\gamma}-1$ be the greatest common divisor. Then there exist  two integers $u$ and $v$ such that $u(p^{m}-1)+v(p^{k}-1)=p^{\gamma}-1$.
		We have now 
		\begin{align*}
		u(p^{m}-1)+v(p^{k}-1)&=p^{\gamma}-1\\
		\Rightarrow au(p^{k}-1)+av(p^{m}-1)&=a(p^{\gamma}-1)\\
		\Rightarrow au(p^{m}-1)+vt(p^{m}-1)&=a(p^{\gamma}-1)\quad (\text{because}\quad a(p^{k}-1)=t(p^{m}-1) )\\
		\Rightarrow (p^{m}-1)(au+vt)&=a(p^{\gamma}-1).
		\end{align*}
		Therefore $(p^{m}-1)$ divides $a(p^{\gamma}-1)$ and this means there exist $b\in \mathbb{N}$ such that $a(p^{\gamma}-1)=b(p^{m}-1)$.
		Since $a$ and $b$ are integers, then $\frac{a(p^{m}-1)}{(p^{\gamma}-1)}$ is also an integer. So, \begin{equation}
		\begin{cases}
		0\leq a< p^{m}-1,\\
		0\leq b< (p^{\gamma}-1),
		\end{cases}
		\end{equation}
		and we consider $\lambda_{0}= g^{a}$. From $a=\frac{p^{m}-1}{p^{\gamma}-1}b$ and $p^{k}-1=t^{'}(p^{\gamma}-1)$ for some integer $t^{'}$, we have
		\begin{align*}
		\lambda_{0}^{p^{k}}-1&=(g^{a})^{t^{'}(p^{\gamma}-1)}-1\\
		&=g^{(\frac{p^{m}-1}{p^{\gamma}-1}b)t^{'}(p^{\gamma}-1)}-1\\
		&=g^{bt^{'}(p^{m}-1)}-1\\
		&=g^{(p^{m}-1)bt^{'}}-1\\
		&=1^{bt^{'}}-1\\
		&=1-1\\
		&=0.
		\end{align*}
		This is because for every element of a finite field power the order of the multiplicative group of that field is equal $1$. 
		
		We know that $\mathbb{F}_{p^{m}}$ is generated by $g$ and $\lambda_{0}\in \mathbb{F}_{p^{m}} $. Then, 
		$\mathbb{F}_{p^{m}}^{*}=\left\lbrace  1, 2, \dots, p^{m}-1\right\rbrace $
		and for $0\leq b< (p^{\gamma}-1)$, all of $g^{a}=\frac{p^{m}-1}{p^{\gamma}-1}b$ are different.
		Now $s(\Omega(\lambda))$ is the set of solution of the equation (\ref{pjy}) for $\lambda \in \mathbb{F}_{p^{m}}$ and those solutions are represented as follow
		\begin{equation}
		s(\Omega(\lambda))=\left\lbrace 0 \right\rbrace \bigcup \left\lbrace g^{b(\frac{p^{m}-1}{p^{\gamma}-1})}: 0\leq b< (p^{\gamma}-1) \right\rbrace,
		\end{equation}
		and the order of $s(\Omega(\lambda))$ is 
		\begin{align*}
		\mid s(\Omega(\lambda))\mid&= 1+p^{\gamma}-1\\
		&=0+p^{\gamma}\\
		&=p^{\gamma}.
		\end{align*}
		Then all solutions of $\Omega(\lambda)$ are in the finite field of order  $p^{\gamma}$ and we conclude that $D(\alpha, \beta)$ coincide with $s(\Omega(\lambda))=\mathbb{F}_{p^{\gamma}}$.
	\end{itemize}
\end{proof}
\newpage
\section{Conclusion}
Let $R$ be a nearfield, as  by Definition (\ref{def221}) $D(R)$  is the set of all distributive elements of $R$ and in the chapter  $4$ in equation  (\ref{ey}), we define  $D(\alpha, \beta)$ is the generalized distributive  set of all elements in $R$ that distribute with $\alpha$ and $ \beta $. In the  chapter $2$ we have shown in Theorem  \ref{theo2} that if $R$ is a nearfield, the $D(R)$ with operations of $R$ is a skwefields ( division ring) and $R$ is a left vector space over $D(R)$. Clearly we saw that that if $R$ is a nearfield, then $C(R)\subset D(R)$ where $C(R)$ is a the  center of $R$. To  study the generalized set of distributive elements, we have shown how to build the finite Dickson nearfield. 

The sets $D(R)$ and $D(\alpha, \beta)$ are related in the sense that $D(R)$ is a subset of $D(\alpha, \beta)$. More precisely, if an element $x\in D(R)$, then it distributes over every element in $R$ including $\alpha$ and $\beta$. Therefore $x$ satisfies the distributive law with respect to $\alpha$ and $\beta$. Hence $x\in D(\alpha
,\beta)$.
However the inverse is not necessarily true. That is an element $y\in  D(\alpha
,\beta)$ may not distribute over every element in $R$ and hence may not belong to $D(R)$. Therefore, $D(R)$ is a proper subset of $D(\alpha, \beta)$  in general. It is worth nothing that $D(R)$  is an important set in the study of near-field. The generalized set $D(\alpha, \beta)$ provides more refined notions of distributivity and is particularly useful for studying finite fields, subnear-fields. 

In the Theorem  \ref{thg}, we have shown that if $R$ is a finite Dickson near-field and $(q, n)$ a Dickson pair, then we have isomorphism between $D(R)$ and $\mathbb{F}_{q}$, where $\mathbb{F}_{q}$ is a finite field of order $q$. 
In  Theorem  \ref{thg} we have shown that $\mathbb{F}_{q}= D(R)$,  When $R$ is a finite Dickson near-field. We now show that $\mathbb{F}^{*}_{q}= D(R)^{*}$, when $R$ is a finite near-field. 

Dickson used a new multiplication to define the generalized distributive set for a given pair $(\alpha, \beta)\in R^{2}$. Moreover, $D(\alpha, \beta)$ is not always a subfield of a given finite field $\mathbb{F}_{q^{n}}$  or subnearfield of $R$. In the lemma (\ref{lemm31}) and in the theorem  (\ref{thre}) we show that $D(\alpha, \beta)$ to  be a sub-field of $\mathbb{F}_{q^{n}}$ depend on some conditions on $\alpha, \beta$ and $\alpha+\beta$.

In this thesis, we have studied the generalized set of distributive elements in nearfields. We have investigated the structure and properties of this set, and provided some new results and insights. Our study sheds light on the behavior of distributive elements in nearfields, and provides a basis for further research in this area.

We began by introducing the concept of distributivity in nearfields, and reviewed some preliminary results and definitions. We then constructed a finite Dickson nearfield, which allowed us to study distributive elements in a concrete setting. We showed that the distributive elements of a Dickson nearfield are related to the quadratic residues and non-residues of the underlying finite field.

Next, we defined the generalized set of distributive elements of a nearfield, and investigated its properties. We showed that this set is a subfield of the nearfield, and that it has several interesting algebraic and combinatorial properties. In particular, we showed that the generalized set of distributive elements is a powerful tool for constructing efficient error-correcting codes and cryptographic primitives.

Our study also revealed several open problems and future directions for research. For example, it would be interesting to investigate the relationship between distributive elements and other algebraic properties of nearfields, such as alternative and power-associative properties. Another interesting direction would be to study the structure and properties of generalized sets of distributive elements in other algebraic systems, such as loops and quasifields.

In conclusion, the study of distributive elements in nearfields is a fascinating and important area of algebraic research. Our study provides a comprehensive investigation of the generalized set of distributive elements in near-fields, and lays the groundwork for further research in this area.

\newpage
\section{Acknowledgement}
This work was carried out at AIMS Rwanda in partial fulfilment of the requirements for a Master of Science Degree.

I hereby declare that except where due acknowledgement is made, this work has never been presented wholly or in part for the award of a degree at AIMS Rwanda or any other University.
\newpage

\end{document}